\newtheorem{theorem}{Theorem}[section]
\newtheorem{lemma}[theorem]{Lemma}
\newtheorem{definition}[theorem]{Definition}
\theoremstyle{definition}
\newtheorem{example}[theorem]{Example}
\theoremstyle{remark}
\numberwithin{equation}{section}
\begin{document}

\title{Adjunction of roots to unitriangular groups over prime finite fields}

\author{Anton Menshov}
\address{Institute of Mathematics and Information Technologies\\Omsk State Dostoevskii University}
\curraddr{}
\email{menshov.a.v@gmail.com}
\thanks{}

\author{Vitali\u\i\ Roman'kov}
\address{Institute of Mathematics and Information Technologies\\Omsk State Dostoevskii University}
\curraddr{}
\email{romankov48@mail.ru}
\thanks{}

\keywords{equations over groups, nilpotent groups, $p$-groups, unitriangular groups, wreath product}

\date{}

\begin{abstract}
In this paper we study embeddings of unitriangular groups UT$_n(\mathbb{F}_p)$ arising under adjunction of roots.
We construct embeddings of UT$_n(\mathbb{F}_p)$ in UT$_m(\mathbb{F}_p)$, for $n \geq 2$, $m=(n-1)p^s + 1$, $s \in \mathbb{Z}^+$, such that any element of UT$_n(\mathbb{F}_p)$ has a $p^s$-th root in UT$_m(\mathbb{F}_p)$.
Also we construct an embedding of the wreath product UT$_n(\mathbb{F}_p) \wr C_{p^s}$ in UT$_m(\mathbb{F}_p)$, where $C_{p^s}$ is the cyclic group of order $p^s$.
\end{abstract}
\maketitle

\section{Introduction}
\label{sec:intro}

Equations over groups is old and well-established area of the group theory.
B.~H.~Neumann  started its systematic investigation in ~\cite{neumann}.
We refer to the survey~\cite{romankov} for developments in this area.
Also a brief historical note could be found in~\cite{magnus_karrass_solitar}.

An equation with the variable $x$ over a group $G$ is an expression of the form
\begin{equation}\label{eq:one_var_eq}
w(x)=1,
\end{equation}
where $w(x) \in G * \langle x \rangle$ is a group word formed with $x$ and elements of $G$.

If $H$ is a bigger group, i.e., a group containing $G$ as a fixed subgroup, then an equation over $G$ could be also considered as an equation over $H.$
Equation~(\ref{eq:one_var_eq}) is {\it solvable in $G$} if there is an element $g \in G$ such that $w(g)=1$.
Equation~(\ref{eq:one_var_eq}) is {\it solvable over $G$} if there is an overgroup $H \geq G$ where this equation has a solution.
In the latter case we may assume that $H$ is generated by a solution of~(\ref{eq:one_var_eq}) and elements of $G$.
In other words, $H$ is obtained by {\it adjoining} a solution of~(\ref{eq:one_var_eq}) to $G$.
We may also call $H$ an {\it extension} of $G$.

In \cite{neumann} B.~H.~Neumann studied conditions on $G$ and $w(x)$ under which equation~(\ref{eq:one_var_eq}) is solvable over $G$ and obtained solution of this problem in general case~\cite[Theorem 2.3]{neumann}.
Also he proved the following:

\begin{theorem}[B.~H.~Neumann,~\cite{neumann}]
\label{th:neumann_01}
The equation
\begin{equation}
\label{eq:power}
x^m = g
\end{equation}
is solvable over an arbitrary group $G$ for any $g \in G$ and $m \in \mathbb{Z}^+$.
\end{theorem}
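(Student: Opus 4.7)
The plan is to construct an explicit overgroup $H \geq G$ in which the given element $g$ acquires an $m$-th root, via the standard (restricted) wreath-product trick. I would set $H := G \wr C_m$, where $C_m = \langle c \mid c^m = 1\rangle$ acts on the base group $B = \prod_{i=0}^{m-1} G_i$ (a copy $G_i$ of $G$ for each $i \in \{0, 1, \ldots, m-1\}$) by cyclic permutation of the coordinates. Then $G$ embeds in $H$ via the diagonal map $\iota \colon g \mapsto (g, g, \ldots, g)$, which is manifestly injective, so it is legitimate to identify $G$ with its diagonal image inside $H$.

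Next, I would introduce the candidate element $t := (g, 1, 1, \ldots, 1) \cdot c \in H$ and compute $t^m$ directly. Using the fact that conjugation by $c^i$ cyclically shifts the coordinates of $B$ by $i$ positions, the product telescopes to
\[
 t^m = \Bigl(\prod_{i=0}^{m-1} c^i (g, 1, \ldots, 1) c^{-i}\Bigr) \cdot c^m = (g, 1, \ldots, 1)(1, g, 1, \ldots, 1) \cdots (1, \ldots, 1, g) = (g, g, \ldots, g) = \iota(g).
\]
Hence $t$ is a solution of $x^m = g$ in $H$ under the diagonal identification, which is exactly the desired conclusion.

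The only point requiring care is wreath-product bookkeeping: one must pin down whether $c$ shifts coordinates left or right, and on which side of $c$ the factor $(g, 1, \ldots, 1)$ is placed, since a mismatched choice leaves $t^m$ supported on a single coordinate rather than diagonally, and then $t^m$ no longer equals $\iota(g)$. Once the conventions are fixed, the argument works without any restriction on $G$ or on $m$, and it in fact adjoins an $m$-th root to every element of $G$ simultaneously, not merely to the prescribed $g$---a strengthening that foreshadows the wreath-product constructions that are the main theme of the present paper.
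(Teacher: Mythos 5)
Your argument is correct: writing $t=(g,1,\ldots,1)\cdot c$, the telescoping identity $t^m=\prod_{i=0}^{m-1}c^{i}(g,1,\ldots,1)c^{-i}$ does hold, and since the $m$ conjugates of $(g,1,\ldots,1)$ occupy the $m$ distinct coordinates and therefore commute, their product is the diagonal element $\iota(g)$. (Your closing caveat is in fact unnecessary: whichever direction $c$ shifts the coordinates and whichever side of $c$ you place the base-group factor, the conjugates still sweep out all $m$ coordinates, so $t^m$ is always the full diagonal and never supported on a single coordinate.) Note, however, that the paper does not prove Theorem~\ref{th:neumann_01} at all --- it is quoted from Neumann, whose original argument uses amalgamated free products (amalgamating $G$ with a cyclic group along $\langle g\rangle=\langle t^m\rangle$), and the paper explicitly remarks that the wreath-product argument you give is Baumslag's alternative proof. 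The two routes buy different things: the amalgamated-product construction adjoins a root to the single prescribed element $g$ and yields an overgroup generated by $G$ together with one new element, but it generally destroys finiteness and nilpotency; your wreath-product construction adjoins $m$-th roots to \emph{all} elements of $G$ simultaneously and, when $G$ is a finite $p$-group and $m$ is a power of $p$, stays inside the class of finite $p$-groups. That second feature is exactly why the wreath-product method, rather than Neumann's, is the one elaborated in the rest of the paper, culminating in the explicit matrix realization of $\mathrm{UT}_n(\mathbb{F}_p)\wr C_q$ in Theorem~\ref{th:main}.
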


Solution of~(\ref{eq:power}) is called {\it $m$-th root} of $g$.
To prove Theorem~\ref{th:neumann_01} B.~H.~Neumann used amalgamated free products.
An alternative way using wreath products was suggested by G.~Baumslag in~\cite{baumslag_01}.

From Theorem~\ref{th:neumann_01} we obtain the following result:

\begin{theorem}[B.~H.~Neumann,~\cite{neumann}]
\label{th:neumann_02}
Every group $G$ is isomorphic to a subgroup of a group $D$ in which every element has an $n$-th root for every $n \in \mathbb{Z}^+$.
\end{theorem}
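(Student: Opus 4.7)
The plan is to iterate Theorem~\ref{th:neumann_01} and pass to a direct limit. Set $G_0 = G$ and construct, by induction on $i$, an ascending chain of groups $G_0 \leq G_1 \leq G_2 \leq \cdots$ with the property that every element of $G_i$ admits an $n$-th root in $G_{i+1}$ for every $n \in \mathbb{Z}^+$. The target group is then $D := \bigcup_{i \geq 0} G_i$, equipped with the obvious group operation inherited from the chain.

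The construction of $G_{i+1}$ from $G_i$ proceeds by a second, inner, (possibly transfinite) induction. Well-order the set of pairs $\{(g,n) : g \in G_i,\ n \in \mathbb{Z}^+\}$ as $\{(g_\alpha, n_\alpha)\}_{\alpha < \lambda}$. Set $H_0 := G_i$ and, given $H_\alpha$, apply Theorem~\ref{th:neumann_01} to the equation $x^{n_\alpha} = g_\alpha$ viewed over $H_\alpha$ (noting that solvability over $G_i$ passes up to any overgroup, so the equation is still solvable over $H_\alpha$) to obtain an overgroup $H_{\alpha+1} \supseteq H_\alpha$ in which $g_\alpha$ acquires an $n_\alpha$-th root. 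At limit ordinals $\beta \leq \lambda$, put $H_\beta := \bigcup_{\alpha < \beta} H_\alpha$. Finally set $G_{i+1} := H_\lambda$; by construction every element of $G_i$ has an $n$-th root in $G_{i+1}$ for every $n$.

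To finish, any $d \in D$ lies in some $G_i$, and hence, for every $n \in \mathbb{Z}^+$, the equation $x^n = d$ has a solution already in $G_{i+1} \subseteq D$. Thus every element of $D$ has an $n$-th root in $D$ for every $n$, while $G = G_0 \hookrightarrow D$ as required.

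The only subtlety is bookkeeping: one must allow transfinite induction when $G_i$ is uncountable, and verify that unions of ascending chains of groups are again groups (immediate) so that the limit steps are well defined. The substantive content comes entirely from Theorem~\ref{th:neumann_01}; everything else is an organizational iteration-and-direct-limit argument.
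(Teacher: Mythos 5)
Your proposal is correct: the standard iterate-and-take-unions argument (adjoining roots for all pairs $(g,n)$ via Theorem~\ref{th:neumann_01}, with transfinite bookkeeping inside each stage and a countable ascending chain overall) is exactly how the paper intends Theorem~\ref{th:neumann_02} to follow from Theorem~\ref{th:neumann_01}, which it cites without further proof. The aside about solvability over $G_i$ passing to overgroups is unnecessary (Theorem~\ref{th:neumann_01} applies directly to $H_\alpha$ since $g_\alpha\in H_\alpha$), but harmless.
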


Such a group $D$ is called a {\it divisible} group.

According to~\cite{wiegold}, in 1960 B.~H.~Neumann posed the following problem: given a nilpotent group $B$, an element $h$ of $B$ and a positive integer $n$, is it always possible to embed $B$ in a nilpotent group containing an $n$-th root for $h$?
In other words, whether {\it nilpotent} adjunction of $n$-th root to $h$ is possible?
This problem was studied in~\cite{wiegold,allenby}.
In particular J.~Wiegold in~\cite{wiegold} proved that it is always possible to nilpotently adjoint $n$-th roots to elements of finite order.
We remark that the answer to the problem posed by B.~H.~Neumann is, in general, negative.
For suppose that $p$ is any given prime.
Then there exists a nilpotent group $G$ (infinitely generated) with an element $u$ (of infinite order) such that any overgroup $H$ of $G$ in which $u$ has a $p$-th root is not nilpotent~\cite{neumann_wiegold}.

We will mention some results of G. Baumslag~\cite{baumslag_04} related to the problem above.

\begin{theorem}[G.~Baumslag,~\cite{baumslag_04}]
\label{th:baumslag_01}
If $p$ is any prime and $G$ any given finitely generated nilpotent group, then $G$ can be embedded in a nilpotent group $H$ so that any element $u$ in $G$ now has a $p$-th root in $H$.
\end{theorem}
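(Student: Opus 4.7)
\emph{Proof plan.} The plan is to argue by induction on the nilpotency class $c$ of $G$, handling the finitely generated abelian case by hand and then propagating $p$-divisibility through the central series.

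For the base case $c=1$, $G \cong \mathbb{Z}^r \oplus T$ with $T$ finite abelian; writing $T = T_p \oplus T_{p'}$, the $p$-th power map is already a bijection on $T_{p'}$, so it suffices to embed $\mathbb{Z}^r \hookrightarrow \mathbb{Z}[1/p]^r$ and each cyclic summand $\mathbb{Z}/p^n$ of $T_p$ into a Prüfer group $\mathbb{Z}(p^{\infty})$. The resulting overgroup is abelian, hence nilpotent of class $1$, and $p$-divisible.

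For the inductive step, let $Z = Z(G)$, so that $G/Z$ has class $c-1$ and $Z$ is finitely generated abelian. By induction $G/Z$ embeds into a nilpotent group $H_0$ in which every element has a $p$-th root, and by the base case $Z$ embeds into an abelian $p$-divisible group $Z^*$. One then tries to realize $G$ as a subgroup of a central extension $1 \to Z^* \to H \to H_0 \to 1$ which is nilpotent of class $\leq c$ and restricts, along $G/Z \hookrightarrow H_0$ and $Z \hookrightarrow Z^*$, to the given extension defining $G$. Granting such an $H$, each $g \in G$ acquires a $p$-th root in $H$ essentially for free: any $p$-th root $\bar y$ of the image of $g$ in $H_0$ lifts to some $y \in H$ with $y^p = g z^*$ for some $z^* \in Z^*$; by $p$-divisibility of $Z^*$ we choose $z' \in Z^*$ with $(z')^p = (z^*)^{-1}$ and replace $y$ by $y z'$, obtaining $(y z')^p = g$ because $z'$ is central.

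The main obstacle is the construction of $H$ itself, which amounts to extending the $2$-cocycle $G/Z \times G/Z \to Z$ classifying $G$ to a $2$-cocycle $H_0 \times H_0 \to Z^*$ whose associated group remains nilpotent of class at most $c$. Cohomologically this is a lifting problem in $H^2$; it is made tractable by the strong injectivity properties of $Z^*$ as an abelian group (a sum of copies of $\mathbb{Q}$, finite $p'$-groups, and Prüfer groups), together with care in the inductive choice of $H_0$ so that its lower central series is controlled by that of $G/Z$. A technically cleaner variant would be to first pass to the torsion-free quotient $G/T(G)$ and embed it into its rational Mal'cev completion (a uniquely divisible nilpotent group of the same class), then glue in a Wiegold-type nilpotent adjunction of $p$-th roots for the finite torsion subgroup $T(G)$, amalgamating the two constructions along the center.
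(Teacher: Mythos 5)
This statement is quoted in the paper as a known result of Baumslag (cited to \cite{baumslag_04}); the paper gives no proof of it, so your argument can only be judged on its own terms, and as it stands it has a genuine gap at exactly the step you flag as ``the main obstacle.'' The base case and the root-lifting computation $(yz')^p=g$ are fine, but the whole content of the inductive step is the existence of a nilpotent central extension $1\to Z^*\to H\to H_0\to 1$ whose restriction along $G/Z\hookrightarrow H_0$ and $Z\hookrightarrow Z^*$ recovers $G$, and you do not construct it. Concretely, you need the image of the extension class $\alpha\in H^2(G/Z;Z)$ in $H^2(G/Z;Z^*)$ to lie in the image of the restriction map $H^2(H_0;Z^*)\to H^2(G/Z;Z^*)$. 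The appeal to ``strong injectivity properties of $Z^*$'' does not deliver this: divisible abelian groups are injective as $\mathbb{Z}$-modules, which kills $\mathrm{Ext}^1_{\mathbb{Z}}$, but group cohomology $H^2(-,Z^*)$ is not $\mathrm{Ext}$ over $\mathbb{Z}$, it does not vanish for the infinite groups $H_0$ arising here, and restriction maps in $H^2$ are not surjective in general; there is no general principle that a central extension defined on a subgroup extends over the ambient group, even with divisible central kernel, and $G/Z$ has infinite index in $H_0$, so no transfer-type argument is available. Since the induction also does not control $H_0$ beyond ``nilpotent with roots for elements of $G/Z$,'' the phrase ``care in the inductive choice of $H_0$ so that its lower central series is controlled'' is doing all the work without any argument behind it.

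The fallback sketch in your last sentence has the same problem in sharper form: the extension $1\to T(G)\to G\to G/T(G)\to 1$ need not split, so you cannot simply treat the torsion part and the torsion-free part separately and then ``glue''; and ``amalgamating the two constructions along the center'' is undefined here, while genuine amalgamated products destroy nilpotency (the paper's introduction even recalls the Neumann--Wiegold example showing that nilpotent adjunction of roots can be impossible, so any gluing step must be justified inside the class of nilpotent groups). To repair the proof you would need either an explicit construction of the extended cocycle (e.g.\ by controlling $H_0$ very concretely, as Baumslag and Mal'cev-completion arguments do, or as this paper does for $\mathrm{UT}_n(\mathbb{F}_p)$ by exhibiting explicit matrix overgroups), or a different mechanism such as wreath-product embeddings in the spirit of Baumslag's \cite{baumslag_01}, rather than an unproved $H^2$ lifting claim.
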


\begin{theorem}[G.~Baumslag,~\cite{baumslag_04}]
\label{th:baumslag_02}
Any finitely generated nilpotent group can be embedded in a locally nilpotent group which is divisible.
\end{theorem}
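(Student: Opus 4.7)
The plan is to iterate Theorem~\ref{th:baumslag_01} along a carefully chosen enumeration of primes and take the union of the resulting chain.

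First, fix a sequence $q_1, q_2, q_3, \ldots$ of primes in which every prime occurs infinitely often (for instance, by diagonally interleaving the primes so that the $n$-th prime appears starting at some stage and then repeatedly thereafter). Next, I would construct an ascending chain
\[
G = G_0 \leq G_1 \leq G_2 \leq \cdots
\]
of finitely generated nilpotent groups by induction: given $G_n$, apply Theorem~\ref{th:baumslag_01} with prime $q_{n+1}$ to obtain a (finitely generated) nilpotent overgroup $G_{n+1}$ in which every element of $G_n$ admits a $q_{n+1}$-th root. Set $D := \bigcup_{n \geq 0} G_n$.

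The verification that $D$ has the required properties is then essentially a bookkeeping exercise. For local nilpotence, any finitely generated subgroup $S \leq D$ has its finitely many generators sitting in a single $G_n$, so $S \leq G_n$ is nilpotent. For divisibility, it suffices by factorisation to produce $p$-th roots for each prime $p$: if $g \in D$, then $g \in G_m$ for some $m$, and since $p$ appears infinitely often in $(q_n)$ we can pick $n > m$ with $q_n = p$; by construction $g$ has a $p$-th root in $G_n \subseteq D$. Composite roots are obtained by iterating this prime-by-prime root extraction.

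The main technical point to watch is that Theorem~\ref{th:baumslag_01} as stated only asserts that the overgroup is nilpotent, whereas the inductive step requires each $G_n$ to be finitely generated in order to feed back into the hypothesis of the theorem. Thus the hard part will be to verify (from the construction underlying Theorem~\ref{th:baumslag_01} in \cite{baumslag_04}) that $G_{n+1}$ can be taken to be finitely generated; once this is in hand, the iteration proceeds indefinitely and $D$ is the desired divisible, locally nilpotent overgroup of $G$.
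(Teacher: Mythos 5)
First, note that the paper does not prove this statement at all: Theorem~\ref{th:baumslag_02} is quoted from~\cite{baumslag_04} as background, so there is no in-paper argument to compare yours with; your chain-and-union strategy is the natural way to derive it from Theorem~\ref{th:baumslag_01}, and it is essentially sound in outline (local nilpotence and divisibility of the union are verified correctly, including the reduction of $n$-th roots to prime roots).

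The genuine gap is the one you flag yourself, and as your proof stands it is fatal rather than a deferred technicality: Theorem~\ref{th:baumslag_01} only applies to \emph{finitely generated} nilpotent groups and only asserts that the overgroup $H$ is nilpotent, so if you set $G_{n+1}:=H$ you have no right to apply the theorem again at the next stage. You cannot simply drop the finite-generation hypothesis either: as the introduction of this paper recalls, by Neumann--Wiegold~\cite{neumann_wiegold} there is an infinitely generated nilpotent group with an element admitting no $p$-th root in any nilpotent overgroup, so the unstrengthened theorem genuinely does not iterate. The repair, however, does not require proving that Baumslag's $H$ can be taken finitely generated: adjoin roots \emph{one element at a time}. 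Given finitely generated nilpotent $G_n$, a prime $p$ and a single $g\in G_n$, Theorem~\ref{th:baumslag_01} gives a nilpotent $H\geq G_n$ containing some $x$ with $x^p=g$; take $G_{n+1}:=\langle G_n,x\rangle\leq H$, which is finitely generated (finitely many generators of $G_n$ plus $x$) and nilpotent (a subgroup of $H$). Replace your prime bookkeeping by a dovetailed enumeration of pairs (element of the union so far, prime) so that every pair is eventually treated; this is possible since each $G_n$ is countable. The union $D=\bigcup_n G_n$ is then locally nilpotent exactly as you argue, and every $g\in D$ acquires a $p$-th root for every prime $p$, hence $n$-th roots for all $n$ by your factorisation remark. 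With this modification your proof is complete; without it, the induction stops after the first step.
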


We also remark that in~\cite{malcev} A.~Mal'cev proved that any torsion free nilpotent group can be embedded in a divisible nilpotent group of the same class.

Let $\mathbb{F}_p$ be the prime finite field of order $p$ and UT$_n(\mathbb{F}_p)$ ($n \geq 2$) be the group of $n \times n$ upper unitriangular matrices over $\mathbb{F}_p$.
In this paper we will be interested in adjunction of $p^s$-th roots, $s \in \mathbb{Z}^+$, to the group UT$_n(\mathbb{F}_p)$, such that an overgroup is of the form UT$_m(\mathbb{F}_p)$ for some $m \geq n$.
Observe that we only need to perform adjunction of $p^s$-th roots to a finite $p$-group $G$, since $G$ contains $n$-th roots for any $n$ such that $\gcd(n,p)=1$.

In the group UT$_n(\mathbb{F}_p)$ by $t_{i,j}$ ($1 \leq i < j \leq n$) we denote a {\it transvection} $e + e_{i,j}$, where $e$ is the identity matrix, $e_{i,j}$ is the matrix having $1$ in the $ij$-component and $0$ otherwise.
Also for any $\gamma \in \mathbb{F}_p$ we denote $t_{i,j}(\gamma) = e + \gamma e_{i,j}$.

In Lemma~\ref{lm:root} we perform adjunction of roots to transvections in UT$_n(\mathbb{F}_p)$.
In Theorem~\ref{th:roots_in_UT_n} we perform {\it simultaneous} adjunction of $q$-th roots ($q=p^s$, $s \in \mathbb{Z}^+$) to UT$_n(\mathbb{F}_p)$, i.e., we construct the embeddings of UT$_n(\mathbb{F}_p)$ in UT$_m(\mathbb{F}_p)$, where $m=(n-1)q + 1$, such that any element of UT$_n(\mathbb{F}_p)$ has a $q$-th root in UT$_m(\mathbb{F}_p)$.

It is well known that any finite $p$-group $G$ is isomorphic to a subgroup of UT$_{|G|}(\mathbb{F}_p)$.
Observe that the wreath product UT$_n(\mathbb{F}_p) \wr C_q$ of the group UT$_n(\mathbb{F}_p)$ with the cyclic group of order $q$ contains a $q$-th root for any element of UT$_n(\mathbb{F}_p)$ and is also a finite $p$-group.
So it embeds in UT$_m(\mathbb{F}_p)$ for some $m > n$.
In Theorem~\ref{th:main} we construct the embedding of UT$_n(\mathbb{F}_p) \wr C_q$ in UT$_m(\mathbb{F}_p)$, where $m = (n-1)q+1$.
This value of $m$ is the minimal possible, since, according to Lemma~\ref{lm:wp_class}, the nilpotency class of UT$_n(\mathbb{F}_p) \wr C_q$ is equal to $(n-1)q$.
In Lemma~\ref{lm:equiv} we show that theorems~\ref{th:roots_in_UT_n} and~\ref{th:main} lead to the same result.

\section{Preliminaries}
\label{sec:preliminaries}

We will outline the definition of the wreath product $G \wr C$.
Let $G^C$ be the group of all mappings from $C$ to $G$ with multiplication defined by $(f \cdot f')(t) = f(t)f'(t)$, for all $t \in C$.
The group $G^C$ is called the base group of the wreath product.
The group $G \wr C$ is the set of pairs
$
\{ sf \mid s \in C, f \in G^C \}
$
with multiplication
\[
sf \cdot s'f' = ss' f^{s'} f',
\]
where $f^{s'}(t) = f(t s'^{-1})$, for all $t \in C$.

If $C=C_n(c)$ is the cyclic group of order $n$, generated by $c$, then an element $f$ of the base group is essentially a tuple of length $n$
\[
(f(c^0),f(c^1),\dots,f(c^{n-1})).
\]
An element $f^c$ is a tuple
\[
(f(c^{n-1}),f(c^0),\dots,f(c^{n-2})),
\]
i.e., the generator $c$ of the cyclic group acts on elements of the base group as the right cyclic shift.

Using wreath products for adjunction of roots, the group $G$ is usually identified with the diagonal subgroup of the base group.

In~\cite{baumslag_01} G.~Baumslag proved that the wreath product $A \wr B$ of two nontrivial nilpotent groups is nilpotent if and only if both $A$ and $B$ are $p$-groups with $A$ of finite exponent and $B$ finite.
Since that there were many attempts to find the class of a nilpotent wreath product.
It was finally obtained by D.~Shield (see~\cite{meldrum, shield_1, shield_2} for details).

\begin{definition}
\label{def:jennings_series}
Let $G$ be a group and $p$ be a prime.
The $K_p$-series of $G$ is defined by
\[
K_{i,p}(G) = \prod_{n p^j \geq i} \gamma_n (G)^{p^j},
\]
where $i \geq 1$ and $\gamma_n (G)$ is the $n$-th term of the lower central series of $G$.
In particular $K_{1,p}(G)=G$.
\end{definition}

The $K_p$-series of a finite $p$-group will eventually reach the trivial group.
Hence the following definition makes sense.

\begin{definition}
\label{def:jennings_series_numbers}
Let $B$ be a finite $p$-group, for a prime $p$.
Let $d$ be the maximal integer such that $K_{d,p}(B) \neq \{1\}$.
For each $v$, $v=1,\dots,d$, define $e(v)$ by
\[
p^{e(v)} = | K_{v,p}(B) / K_{v+1,p}(B) |
\]
and define $a$ and $b$ by
\begin{align*}
a &= 1 + (p-1) \sum_{v=1}^{d} v e(v), \\
b &= (p-1)d.
\end{align*}
\end{definition}

We can now state Shield's result.

\begin{theorem}[D.~Shield,~\cite{shield_2}]
\label{th:shield}
Let $p$ be a prime, $A$ a $p$-group, nilpotent of class $c$, and of finite exponent, and let $B$ be a finite $p$-group, with $a$ and $b$ defined as in Definition~\ref{def:jennings_series_numbers}.
Define $s(w)$ by $p^{s(w)}$ is the exponent of $\gamma_w(A)$, for $w=1,\dots,c$.
Then
\[
\mathrm{cl}(A \wr B) = \max_{1 \leq w \leq c} \{ aw + (s(w)-1)b \},
\]
where $\mathrm{cl}(G)$ is the nilpotency class of $G$.
\end{theorem}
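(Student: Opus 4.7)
The $K_p$-series in Definition~\ref{def:jennings_series} is engineered so that the successive quotients $K_{v,p}(B)/K_{v+1,p}(B)$ correspond to the graded pieces of the augmentation-ideal filtration on $\mathbb{F}_p B$ (the Jennings--Zassenhaus theorem); under this identification the nilpotency index of the augmentation ideal $\omega = \omega(\mathbb{F}_p B)$ equals $a$, and raising to $p$-th powers shifts the grading by exactly $b = (p-1)d$. My first move is to recast wreath-product commutators in this language: for $f \in A^B$ and $b \in B$ we have $[f, b] = f^{-1} f^b$, which behaves modulo deeper terms like the action of $b - 1 \in \omega$ on $f$. Consequently, iterated commutators with $b_1, \ldots, b_k \in B$ correspond to multiplying $f$ by $(b_1 - 1)(b_2 - 1) \cdots (b_k - 1) \in \omega^k$, and all questions about commutator weight in $A \wr B$ reduce to computing in this graded ring while keeping track of the class and exponent information of $A$.

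\textbf{Lower bound.} For each $w$ with $1 \leq w \leq c$, I would pick a nontrivial $\alpha \in \gamma_w(A)$ of exponent exactly $p^{s(w)}$ and realize it as a $w$-fold commutator of base-group functions concentrated at a single coordinate. Iteratively commuting this expression with well-chosen elements of $B$ lets one multiply by any element of $\omega^{a-1}$, which is nonzero by the definition of $a$ via the Jennings polynomial. To push beyond weight $aw$ and reach $aw + (s(w)-1) b$, I exploit the restricted $p$-operation on the graded algebra: each $p$-th power step descends through exactly $b$ layers of the Jennings filtration, so $s(w) - 1$ applications of Frobenius (allowed by the exponent of $\alpha$) produce $(s(w)-1) b$ additional nonvanishing commutator levels before the order of $\alpha$ is exhausted.

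\textbf{Upper bound and main obstacle.} For the upper bound, Hall-type collection shows that any basic commutator of weight $N$ in $A \wr B$ involving $w$ entries from $A$ lands in the image of $\gamma_w(A)^{p^{s(w)-1}} \cdot \omega^{\,N - (s(w)-1) b}$ inside the base group, and therefore vanishes once $N > aw + (s(w)-1) b$; optimizing over $w \leq c$ gives the maximum in the theorem. The main technical obstacle is ensuring that the lower-bound construction produces an element that is genuinely nontrivial in $A \wr B$, not merely in the associated graded object: one has to control cancellations across the Hall collection, which is where Shield invokes the Jennings Basis Theorem for $\mathbb{F}_p B$ together with a Mal'cev-type basis for $A$ derived from its own $K_p$-series to verify that the leading term of the candidate iterated commutator survives. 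Matching the two bookkeepings so that the $w$ optimizing the $A$-side exponent also maximizes the combined expression is the delicate combinatorial heart of the argument.
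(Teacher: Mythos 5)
There is nothing in the paper to compare against: Theorem~\ref{th:shield} is not proved here at all, it is imported verbatim from Shield's papers \cite{shield_1,shield_2} and used as a black box (only in Lemma~\ref{lm:wp_class}). So the only question is whether your outline would itself constitute a proof, and it would not. You correctly identify the relevant machinery --- by Jennings' theorem $a$ is the nilpotency index of the augmentation ideal $\omega=\omega(\mathbb{F}_pB)$, commutation of a base-group element with elements of $B$ behaves like multiplication by elements of $\omega$, and the exponent $p^{s(w)}$ of $\gamma_w(A)$ is what should buy the extra $(s(w)-1)b$ --- but the two statements that carry the entire load are simply asserted. The upper-bound claim (that a weight-$N$ commutator with $w$ entries from $A$ lies in the image of $\gamma_w(A)^{p^{s(w)-1}}\cdot\omega^{\,N-(s(w)-1)b}$) and the lower-bound claim (that a specific iterated commutator of weight $aw+(s(w)-1)b$ survives all collection cancellations) are precisely the content of Shield's two long papers; invoking ``Hall-type collection'' and ``the Jennings Basis Theorem'' names the tools without doing the work. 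Note also that the clean identification of the base group with an $\mathbb{F}_pB$- (or $\mathbb{Z}/p^{s}B$-) module is only available when $A$ is abelian (Liebeck's special case); for nilpotent $A$ of class $c>1$ one must control commutators mixing distinct coordinates and the interaction of the $B$-action with the lower central series of $A$, which your sketch does not address.

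There is moreover a concrete error in the mechanism you propose for the term $(s(w)-1)b$. You say that ``each $p$-th power step descends through exactly $b$ layers of the Jennings filtration''; but in the graded algebra, if $u\in\omega^{i}$ then $u^{p}\in\omega^{pi}$, a shift of $(p-1)i$ that depends on $i$, not the constant $b=(p-1)d$. The constant $b$ actually enters through the divisibility structure of powers of the augmentation ideal with coefficients modulo $p^{s(w)}$: roughly, multiplying a top-layer element by $(g-1)^{p-1}$, where $g$ generates the deepest nonzero term $K_{d,p}(B)$, lands in $p\cdot(\text{nonzero})$ rather than in $0$, and it is this trade of one factor of $p$ in the order of the base entry for $b$ further commutations with $B$ that must be made precise and iterated $s(w)-1$ times. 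As written, your Frobenius argument for the lower bound does not go through, so the proposal is a plausible roadmap to Shield's theorem rather than a proof of it.
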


We will apply the theorem above to prove the following lemma.

\begin{lemma}\label{lm:wp_class}
Let $p$ be a prime, $q=p^s$, $s \in \mathbb{Z}^+$, and $n \geq 2$, then
\[
\mathrm{cl}(\mathrm{UT}_n(\mathbb{F}_p) \wr C_q) = q(n-1).
\]
\end{lemma}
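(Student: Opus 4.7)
The plan is to apply Shield's theorem (Theorem~\ref{th:shield}) to $A=\mathrm{UT}_n(\mathbb{F}_p)$ and $B=C_q$. This reduces the problem to four essentially computational tasks: determining the $K_p$-invariants $a$ and $b$ of $C_q$, computing the exponents $p^{s(w)}$ of the terms of the lower central series of $\mathrm{UT}_n(\mathbb{F}_p)$, substituting into Shield's formula, and maximizing over $w$.

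For the first task, since $B=C_{p^s}$ is abelian, $\gamma_n(B)=1$ for $n\geq 2$, and Definition~\ref{def:jennings_series} collapses to $K_{i,p}(B)=B^{p^{\lceil\log_p i\rceil}}$. Tracking when this group drops in size as $i$ grows from $1$ to $p^s$, I would find that $K_{i,p}(B)\neq\{1\}$ exactly for $1\leq i\leq p^{s-1}$, so $d=p^{s-1}$, and $e(v)=1$ precisely for $v\in\{1,p,p^2,\dots,p^{s-1}\}$ while $e(v)=0$ otherwise. Summing the geometric series gives
\[
a = 1+(p-1)\sum_{k=0}^{s-1}p^k = p^s = q,\qquad b = (p-1)p^{s-1}.
\]

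For the second task, observe that for any $N\in M_n(\mathbb{F}_p)$ strictly upper triangular with zero entries on super-diagonals $0,1,\dots,w-1$ (i.e.\ $I+N\in\gamma_w(\mathrm{UT}_n(\mathbb{F}_p))$), the Frobenius identity in characteristic $p$ gives $(I+N)^{p^j}=I+N^{p^j}$, and $N^{p^j}$ vanishes iff $wp^j\geq n$. Hence
\[
s(w)=\bigl\lceil\log_p(n/w)\bigr\rceil,\qquad 1\leq w\leq c=n-1.
\]

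For the last two tasks, Shield's theorem now yields
\[
\mathrm{cl}(\mathrm{UT}_n(\mathbb{F}_p)\wr C_q) = \max_{1\leq w\leq n-1}\Bigl\{qw + \bigl(\lceil\log_p(n/w)\rceil-1\bigr)(p-1)p^{s-1}\Bigr\}.
\]
At $w=n-1$ the bracket term vanishes and the expression equals $q(n-1)$, so this is a lower bound. For the matching upper bound, I would set $k=\lceil\log_p(n/w)\rceil$, use the resulting inequality $n\geq wp^{k-1}+1$ to bound $n-1-w\geq w(p^{k-1}-1)$, and reduce the desired inequality $q(n-1-w)\geq (k-1)(p-1)p^{s-1}$ to the elementary estimate $p^{k-1}-1\geq (p-1)(k-1)/p$, which follows from the geometric series $p^{k-1}-1=(p-1)(1+p+\cdots+p^{k-2})$ since each of the $k-1$ summands is at least $1$.

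The main obstacle is bookkeeping rather than depth: identifying $d$, $e(v)$, and $s(w)$ correctly (especially the ceiling functions), and then verifying monotonicity so that the maximum is genuinely attained at $w=n-1$. Once these calculations are done, Shield's theorem delivers the result immediately.
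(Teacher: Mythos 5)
Your proposal is correct and follows essentially the same route as the paper: compute $a=q$ and $b=(p-1)p^{s-1}$ for $C_q$ from the $K_p$-series and then apply Shield's theorem. The only difference is that you explicitly determine $s(w)=\lceil\log_p(n/w)\rceil$ and verify the inequality showing the maximum occurs at $w=n-1$, a step the paper dismisses as ``easy to see,'' so your write-up merely supplies that omitted detail.
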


\begin{proof}
First we will compute for the group $G=C_q$ its $K_p$-series (by Definition~\ref{def:jennings_series}).
It is easy to check that $K_p$-series of $G$ has the form
\begin{align*}
K_{1,p} =
G &\geq \overbrace{G^p \geq \dots \geq G^p}^{p-1}
  \geq \overbrace{G^{p^2} \geq \dots \geq G^{p^2}}^{p(p-1)} \geq \dots \\
  &\geq \overbrace{G^{p^{s-1}} \geq \dots \geq G^{p^{s-1}}}^{p^{s-2}(p-1)}
  \geq K_{p^{s-1}+1,p} = \{1\}.
\end{align*}
Then, using notation of Definition~\ref{def:jennings_series_numbers}, the sequence $e(v)$ ($v=1,\dots,d$) has the form
\[
\overbrace{1,0,\dots,0}^{p-1},
\overbrace{1,0,\dots,0}^{p(p-1)},
\dots,
\overbrace{1,0,\dots,0}^{p^{s-2}(p-1)},
1
\]
and $d=p^{s-1}$, $b=(p-1)p^{s-1}$,
\[
a = 1 + (p-1) \sum_{v=1}^{d} v e(v) = 1 + (p-1) (1 + p + p^2 + \dots + p^{s-1}) = p^s.
\]

\noindent
By Theorem~\ref{th:shield} the nilpotency class of the wreath product UT$_n(\mathbb{F}_p) \wr C_q$ is determined by
\[
\mathrm{cl}(\mathrm{UT}_n(\mathbb{F}_p) \wr C_q) =
    \max_{1 \leq w \leq n-1} \{p^s w + (s(w)-1)(p-1)p^{s-1}\},
\]
where $p^{s(w)}$ is the exponent of $\gamma_w(\mathrm{UT}_n(\mathbb{F}_p))$.
Observe that $\gamma_{n-1}(\mathrm{UT}_n(\mathbb{F}_p))$ is the cyclic group of order $p$, hence $s(n-1)=1$.
It is easy to see that maximum is attained when $w=n-1$, so
\[
\mathrm{cl}(\mathrm{UT}_n(\mathbb{F}_p) \wr C_q) =
    p^s(n-1) = q(n-1).
\]

\end{proof}

\section{Adjunction of roots}
\label{sec:adjunction_of_roots}

We will introduce the following notations for the subgroups of UT$_n(\mathbb{F}_p)$:
\begin{equation}\label{eq:subgroups_of_UT}
\begin{aligned}
\mathrm{FR}_n &= \{ (a_{ij}) \in \mathrm{UT}_n(\mathbb{F}_p) \mid a_{ij}=0, j > i > 1 \}, \\
\mathrm{LC}_n &= \{ (a_{ij}) \in \mathrm{UT}_n(\mathbb{F}_p) \mid a_{ij}=0, n > j > i \}, \\
\mathrm{A}_n  &= \{ (a_{ij}) \in \mathrm{UT}_n(\mathbb{F}_p) \mid a_{1j}=0, j >1 \}, \\
\mathrm{B}_n  &= \{ (a_{ij}) \in \mathrm{UT}_n(\mathbb{F}_p) \mid a_{in}=0, i < n \}.
\end{aligned}
\end{equation}

\noindent
Observe that subgroups FR$_n$ and LC$_n$ are normal in UT$_n(\mathbb{F}_p)$ and isomorphic to $C_p^{n-1}$, where $C_p$ is the cyclic group of order $p$.
Subgroups A$_n$ and B$_n$ are naturally isomorphic to UT$_{n-1}(\mathbb{F}_p)$.
Furthermore
\[
\mathrm{UT}_n(\mathbb{F}_p)
    = \mathrm{FR}_n \leftthreetimes \mathrm{A}_n
    = \mathrm{LC}_n \leftthreetimes \mathrm{B}_n.
\]

\noindent
Let $t_{i,j}$ denote a transvection in the group UT$_n(\mathbb{F}_p)$.
It is known that $t_{i,j}$ satisfy the following relations
\begin{equation}\label{eq:UT_relations}
\begin{aligned}
    ~[t_{i,j},t_{j,k}] &= t_{i,k}, \\
    [t_{i,j},t_{k,l}] &= 1, \qquad j \neq k, \; i \neq l, \\
    t_{i,j}^p &= 1,
\end{aligned}
\end{equation}
and any other relation between elements of UT$_n(\mathbb{F}_p)$ is a consequence of relations~(\ref{eq:UT_relations}).

Let $m>n$ and $1=k_1 < k_2 \dots < k_n=m$ be a sequence of integers.
By $t_{i,j}'$ we denote a transvection in UT$_m(\mathbb{F}_p)$.
It is clear that the mapping
\begin{equation}
\label{eq:simple_UT_embedding}
\phi: t_{i,i+1} \mapsto t_{k_i,k_{i+1}}', \quad i=1,\dots,n-1,
\end{equation}
is an embedding of UT$_n(\mathbb{F}_p)$ in UT$_m(\mathbb{F}_p)$.

The following lemma utilizes the embedding above to perform adjunction of roots to transvections in UT$_n(\mathbb{F}_p)$.
Further we will use rational numbers to index rows and columns of matrices.

\begin{lemma}
\label{lm:root}
The equation

\begin{equation}
\label{eq:transvection_root}
x^{p^s r} = t_{i,j}(\gamma),
\end{equation}

\noindent
over UT$_n(\mathbb{F}_p)$ ($n \geq 2$),
where $\gcd(p,r)=1$, $s \in \mathbb{Z}^+$,
is solvable in an overgroup isomorphic to UT$_m(\mathbb{F}_p)$, where $m = n + p^s - 1.$
\end{lemma}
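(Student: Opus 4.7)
The plan is to construct an explicit $p^s$-th root of a suitable scalar multiple of $t_{i,j}$ inside $\mathrm{UT}_m(\mathbb{F}_p)$, and then exploit the invertibility of $r$ modulo $p$ to upgrade it to a $p^s r$-th root of $t_{i,j}(\gamma)$. Specifically, with $\gamma_0 = r^{-1}\gamma \in \mathbb{F}_p$, any $y \in \mathrm{UT}_m(\mathbb{F}_p)$ with $y^{p^s} = t_{i,j}(\gamma_0)$ automatically satisfies $y^{p^s r} = t_{i,j}(r\gamma_0) = t_{i,j}(\gamma)$, since $t_{i,j}(\cdot)$ lies in a cyclic group of order $p$. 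So the task reduces to adjoining a $p^s$-th root of a transvection.

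Next I would take the embedding $\phi$ of form~(\ref{eq:simple_UT_embedding}) with the concrete choice
\[
k_a = a \quad (1 \leq a \leq i), \qquad k_a = a + p^s - 1 \quad (i < a \leq n).
\]
This concentrates all $p^s - 1$ positions of $\{1,\dots,m\}$ that are missed by $\{k_1,\dots,k_n\}$, namely $i+1,\dots,i+p^s-1$, into the single gap $(k_i,k_{i+1}) = (i, i+p^s)$. Iterating the relation $[t_{a,b},t_{b,c}] = t_{a,c}$ from~(\ref{eq:UT_relations}) then gives $\phi(t_{i,j}(\gamma_0)) = t_{k_i,k_j}(\gamma_0) = t_{i,\,j+p^s-1}(\gamma_0)$.

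The promised root is
\[
x = e + \gamma_0\, e_{i,i+1} + \sum_{\ell=1}^{p^s - 2} e_{i+\ell,\,i+\ell+1} + e_{i+p^s-1,\,j+p^s-1} \in \mathrm{UT}_m(\mathbb{F}_p).
\]
The nilpotent matrix $N = x - e$ is the adjacency matrix of a directed chain
$i \to i+1 \to \cdots \to i+p^s-1 \to j+p^s-1$
of exactly $p^s$ edges, so $N^{p^s} = \gamma_0 e_{k_i,k_j}$ and $N^{p^s+1} = 0$. By Lucas' theorem, $\binom{p^s}{d} \equiv 0 \pmod p$ for $1 \leq d \leq p^s - 1$, whence $x^{p^s} = (e+N)^{p^s} = e + N^{p^s} = \phi(t_{i,j}(\gamma_0))$; combined with the reduction, $x^{p^s r} = \phi(t_{i,j}(\gamma))$, as required.

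The one piece that is not purely formal is the combinatorial layout of the embedding: only $p^s - 1$ slack indices are available in $\{1,\dots,m\}$, and they must all be collected into a single gap of width $p^s$ between two consecutive chosen indices, so that a length-$p^s$ chain from $k_i$ to $k_j$ can be drawn through unchosen vertices. Once this is arranged, the Frobenius-type identity $(e+N)^{p^s} = e + N^{p^s}$ takes care of the computation.
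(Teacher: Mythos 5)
Your proof is correct and follows essentially the same route as the paper: both embed $\mathrm{UT}_n(\mathbb{F}_p)$ into $\mathrm{UT}_m(\mathbb{F}_p)$ by stretching the single gap between the images of $i$ and $i+1$ into $p^s$ consecutive positions (the paper via inserted rational indices, you via the explicit renumbering $k_a$) and then take for $x$ a path matrix of length $p^s$ from the image of $i$ to the image of $j$ whose edge weights multiply to $r^{-1}\gamma$, so that $x^{p^s}=e+r^{-1}\gamma\,e_{k_i,k_j}$ and hence $x^{p^s r}=\phi(t_{i,j}(\gamma))$. The only differences (putting all of $r^{-1}\gamma$ on the first edge, and invoking Lucas explicitly) are cosmetic.
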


\begin{proof}
For brevity denote $q=p^s$.
It is well known that UT$_n(\mathbb{F}_p)$ is generated by $t_{i,i+1}$, for $i=1,\dots,n-1$.
Insert a sequence of $q-1$ numbers $\alpha_l \in \mathbb{Q} \setminus \mathbb{N}$ between $i$ and $j$ such that
\[
i < \alpha_1 < \dots < \alpha_{q-1} < j.
\]
Positions of $\alpha_l$ relative to indices $i+1,\dots,j-1$ don't have much effect.
For simplicity we assume that  $1 < \alpha_l < i+1$, for $l=1,\dots,q-1$.

Let UT$_n(\mathbb{F}_p)$ be embedded in UT$_m(\mathbb{F}_p)$, generated by transvections
\[
t_{1,2}', \dots, t_{i-1,i}',
t_{i,\alpha_1}', t_{\alpha_1,\alpha_2}', \dots, t_{\alpha_{q-1},i+1}',
t_{i+1,i+2}', \dots, t_{n-1,n}',
\]
by the mapping
\begin{equation}
\label{eq:transvection_embedding}
\phi: t_{i,i+1} \mapsto t_{i,i+1}', \quad i=1,\dots,n-1.
\end{equation}
In UT$_m(\mathbb{F}_p)$ take the element
\[
x = e + \gamma_1 e_{i,\alpha_1} + \dots + \gamma_q e_{\alpha_{q-1},j},
\]
where $\gamma_1 \gamma_2 \dots \gamma_q = r^{-1} \gamma $ ($r^{-1}$ is the inverse for $r$ modulo $p$), then
\[
x^{qr} = (e + r^{-1} \gamma e_{i,j})^r = 
    e + \gamma e_{i,j} = t_{i,j}(\gamma ).
\]
Hence $x$ is a solution of~(\ref{eq:transvection_root}).
\end{proof}

The method above, however, doesn't allow to adjoint a root to any element of UT$_n(\mathbb{F}_p)$.
Now for $q=p^s$, $s \in \mathbb{Z}^+$, we will describe the embeddings of UT$_n(\mathbb{F}_p)$ in UT$_m(\mathbb{F}_p)$, where $m=(n-1)q+1$, that naturally arise in Theorem~\ref{th:roots_in_UT_n}.

Let $\alpha_{i,j} \in \mathbb{Q}$ be such that
\[
i < \alpha_{i,1} < \dots < \alpha_{i,q-1} < i+1, \qquad i=1,\dots,n-1,
\]
and let UT$_m(\mathbb{F}_p)$ be generated by
\[
t_{i, \alpha_{i,1}}', t_{\alpha_{i,1}, \alpha_{i,2}}', \dots, t_{\alpha_{i,q-1}, i+1}', \qquad i=1,\dots,n-1.
\]
Consider the embedding $\phi:$ UT$_n(\mathbb{F}_p) \to$ UT$_m(\mathbb{F}_p)$ defined by
\begin{equation}
\label{eq:UT_embedding_FR}
\phi: \begin{array}{rcl}
    t_{1,2} & \mapsto & t_{1,2}', \\
    t_{2,3} & \mapsto & t_{2,3}' t_{\alpha_{1,1}, \alpha_{2,1}}' t_{\alpha_{1,2}, \alpha_{2,2}}' \dots t_{\alpha_{1,q-1}, \alpha_{2,q-1}}', \\
    t_{3,4} & \mapsto & t_{3,4}' t_{\alpha_{2,1}, \alpha_{3,1}}' t_{\alpha_{2,2}, \alpha_{3,2}}' \dots t_{\alpha_{2,q-1}, \alpha_{3,q-1}}', \\
    & \dots & \\
    t_{n-1,n} & \mapsto & t_{n-1,n}' t_{\alpha_{n-2,1}, \alpha_{n-1,1}}' t_{\alpha_{n-2,2}, \alpha_{n-1,2}}' \dots t_{\alpha_{n-2,q-1}, \alpha_{n-1,q-1}}'.
\end{array}
\end{equation}
We will prove that~(\ref{eq:UT_embedding_FR}) is really an embedding.

Define the following subgroups in UT$_m(\mathbb{F}_p)$
\begin{align*}
H_i &= \left\langle t_{\alpha_{1,i}, \alpha_{2,i}}', t_{\alpha_{2,i}, \alpha_{3,i}}', \dots, t_{\alpha_{n-2,i}, \alpha_{n-1,i}}' \right\rangle, \quad i=1,\dots,q-1, \\
H_q &= \left\langle t_{2,3}', t_{3,4}', \dots, t_{n-1,n}' \right\rangle.
\end{align*}
Observe that $H_i\simeq$ UT$_{n-1}(\mathbb{F}_p)$, for $i=1,\dots,q$, and for $k \neq l$ subgroups $H_k$ and $H_l$ are commuting element-wise.
Denote $H = H_1 \times \dots \times H_q$ and $D(H)$ is the diagonal subgroup of $H$, then $D(H)\simeq$ UT$_{n-1}(\mathbb{F}_p)$.
Consider a subgroup $W$ of FR$_m$, consisting of matrices having nonzero elements only in positions $kq+1$, for $k=0,\dots,n-1$. 
It is easy to see that $W\simeq$ FR$_n$.
Take a semidirect product $P = W \leftthreetimes D(H)$, where an element $(h_1,\dots,h_q) \in D(H)$ acts on $w \in W$ as a conjugation by $h_q$.
Then $P\simeq$ FR$_n \; \leftthreetimes$ UT$_{n-1}(\mathbb{F}_p) \simeq$ UT$_n(\mathbb{F}_p)$.
The basis of $P$ consist of images $\phi(t_{i,i+1})$, $i=1,\dots,n-1$, specified in~(\ref{eq:UT_embedding_FR}).
Hence~(\ref{eq:UT_embedding_FR}) is really an embedding.

\begin{example}
Let $n=p=q=3$, then the image of
\[
a = \left( \begin{smallmatrix}
1 & a_{12} & a_{13} \\
0 & 1      & a_{23} \\
0 & 0      & 1
\end{smallmatrix} \right) \in \mathrm{UT}_3(\mathbb{F}_3)
\]
under embedding~(\ref{eq:UT_embedding_FR}) is equal to
\[
\phi(a) = \left( \begin{smallmatrix}
1 & 0 & 0 & a_{12} & 0 & 0 & a_{13} \\
0 & 1 & 0 & 0 & a_{23} & 0 & 0 \\
0 & 0 & 1 & 0 & 0 & a_{23} & 0 \\
0 & 0 & 0 & 1 & 0 & 0 & a_{23} \\
0 & 0 & 0 & 0 & 1 & 0 & 0 \\
0 & 0 & 0 & 0 & 0 & 1 & 0 \\
0 & 0 & 0 & 0 & 0 & 0 & 1
\end{smallmatrix} \right) \in \mathrm{UT}_7(\mathbb{F}_3)
\]
\end{example}

\medskip

Similarly one can define the embedding $\psi:$ UT$_n(\mathbb{F}_p) \to$ UT$_m(\mathbb{F}_p)$ by
\begin{equation}
\label{eq:UT_embedding_LC}
\psi: \begin{array}{rcl}
    t_{1,2} & \mapsto & t_{1,2}' t_{\alpha_{1,1}, \alpha_{2,1}}' t_{\alpha_{1,2}, \alpha_{2,2}}' \dots t_{\alpha_{1,q-1}, \alpha_{2,q-1}}', \\
    t_{2,3} & \mapsto & t_{2,3}' t_{\alpha_{2,1}, \alpha_{3,1}}' t_{\alpha_{2,2}, \alpha_{3,2}}' \dots t_{\alpha_{2,q-1}, \alpha_{3,q-1}}', \\
        & \dots & \\ 
        t_{n-2,n-1} & \mapsto & t_{n-2,n-1}' t_{\alpha_{n-2,1}, \alpha_{n-1,1}}' t_{\alpha_{n-2,2}, \alpha_{n-1,2}}' \dots t_{\alpha_{n-2,q-1}, \alpha_{n-1,q-1}}', \\
        t_{n-1,n} & \mapsto & t_{n-1,n}'.
\end{array}
\end{equation}
In a similar way one can prove that~(\ref{eq:UT_embedding_LC}) is really an embedding.

\begin{example}
Let $n=p=q=3$, then the image of
\[
a = \left( \begin{smallmatrix}
1 & a_{12} & a_{13} \\
0 & 1      & a_{23} \\
0 & 0      & 1
\end{smallmatrix} \right) \in \mathrm{UT}_3(\mathbb{F}_3)
\]
under embedding~(\ref{eq:UT_embedding_LC}) is equal to
\[
\psi(a) = \left( \begin{smallmatrix}
1 & 0 & 0 & a_{12} & 0 & 0 & a_{13} \\
0 & 1 & 0 & 0 & a_{12} & 0 & 0 \\
0 & 0 & 1 & 0 & 0 & a_{12} & 0 \\
0 & 0 & 0 & 1 & 0 & 0 & a_{23} \\
0 & 0 & 0 & 0 & 1 & 0 & 0 \\
0 & 0 & 0 & 0 & 0 & 1 & 0 \\
0 & 0 & 0 & 0 & 0 & 0 & 1
\end{smallmatrix} \right) \in \mathrm{UT}_7(\mathbb{F}_3)
\]
\end{example}

\medskip

Using embeddings~(\ref{eq:UT_embedding_FR}) and~(\ref{eq:UT_embedding_LC}) one can adjoint roots to any elements of UT$_n(\mathbb{F}_p)$.

\begin{theorem}
\label{th:roots_in_UT_n}
The equation

\begin{equation}
\label{eq:power_p_eq_over_UT_n}
x^q = a,
\end{equation}

\noindent
over UT$_n(\mathbb{F}_p)$ ($n \geq 2$),
where $q=p^s$, $s \in \mathbb{Z}^+$, is solvable in an overgroup isomorphic to UT$_m(\mathbb{F}_p)$, where $m=(n-1)q+1.$
\end{theorem}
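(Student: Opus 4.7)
The plan is to write down an explicit $q$-th root of $\phi(a)$ in $\mathrm{UT}_m(\mathbb{F}_p)$, where $\phi$ is the embedding of~(\ref{eq:UT_embedding_FR}). The key reduction is the Frobenius-type identity $(e+\xi)^p = e + \xi^p$ valid for every strictly upper-triangular $\xi$ over $\mathbb{F}_p$ (a consequence of the binomial expansion together with $p \mid \binom{p}{k}$ for $1 \le k \le p-1$); iterating gives $(e+\xi)^q = e + \xi^q$. So it suffices to exhibit a strictly upper-triangular $\xi$ with $\xi^q = \phi(a) - e$, and then $x = e + \xi$ will be the required root.

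I would first pin down the matrix $\phi(a) - e$ by factoring $a = fg$ along $\mathrm{UT}_n = \mathrm{FR}_n \leftthreetimes \mathrm{A}_n$ and expanding the product $\phi(f)\phi(g)$. The cross-term $(\phi(f)-e)(\phi(g)-e)$ has support only in the first row, and its contribution precisely cancels the correction between $f_{1,j}$ and $a_{1,j}$ coming from $(fg)_{1,j} = f_{1,j} + \sum_{k=2}^{j-1} f_{1,k}\, g_{k,j}$. As a result $\phi(a) - e$ has entry $a_{ij}$ at each position $(\mathrm{label}\; i, \mathrm{label}\; j)$ for $1 \le i < j \le n$ and entry $a_{ij}$ at each $(\alpha_{i-1,k}, \alpha_{j-1,k})$ for $2 \le i < j \le n$, $1 \le k \le q-1$, with zeros elsewhere. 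Next I would define $\xi$ by setting $\xi_{(j-1)q+1,\,(j-1)q+2} = a_{j,j+1}$ for $j = 1,\dots,n-1$ (the label-$j$ to $\alpha_{j,1}$ superdiagonal edges), $\xi_{r,r+1} = 1$ at every remaining superdiagonal position, and ``jump'' entries $\xi_{(i-1)q+1,\,(j-2)q+2} = a_{ij}$ for each pair with $j \ge i+2$ (sending label $i$ directly to $\alpha_{j-1,1}$).

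The final step is to verify $\xi^q = \phi(a) - e$ by counting directed walks of length exactly $q$ in the support graph of $\xi$. The decisive structural fact is that jumps only emanate from labels and always land at vertices $\alpha_{\cdot,1}$, while every outgoing edge from an $\alpha$-vertex is superdiagonal. Consequently, a length-$q$ walk from label $i$ either takes $q$ superdiagonal steps (ending at label $i+1$ with weight $a_{i,i+1}$) or makes one jump followed by $q-1$ forced superdiagonal steps (ending at label $j$ with weight $a_{ij}$); and a length-$q$ walk from $\alpha_{i-1,k}$ is forced to climb superdiagonally to label $i$ in its first $q-k$ steps and then behaves analogously, ending at some $\alpha_{j-1,k}$ with weight $a_{ij}$. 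Each endpoint is reached by a unique walk, so $\xi^q$ matches $\phi(a) - e$ entry by entry. The main obstacle is the combinatorial bookkeeping in this last step: one has to confirm exhaustiveness of the enumeration and rule out spurious length-$q$ walks that would create nonzero entries at mixed label--$\alpha$ positions or between $\alpha_{\cdot,k}$ and $\alpha_{\cdot,k'}$ with $k \ne k'$, where $\phi(a)$ vanishes. The rigid placement of jumps---only from labels, only landing at some $\alpha_{\cdot,1}$---is precisely what makes these paths unique and excludes the spurious ones.
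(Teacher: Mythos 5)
Your proposal is correct, and it reaches the paper's conclusion by a genuinely different verification, even though the root itself turns out to be the same element: the paper's solution $x=x_{n-1}\cdots x_1$ of~(\ref{eq:solution_form}) multiplies out to exactly your $e+\xi$ (the cross terms $(x_k-e)(x_l-e)$ with $k>l$ vanish), and your description of $\phi(a)-e$ under embedding~(\ref{eq:UT_embedding_FR}), obtained from the factorization $a=fg$ along $\mathrm{UT}_n(\mathbb{F}_p)=\mathrm{FR}_n\leftthreetimes \mathrm{A}_n$ with the first-row cross term restoring $a_{1,j}$ from $f_{1,j}$, is accurate. The difference is in how $x^q=\phi(a)$ is checked: the paper argues by induction on $n$, passing to the quotient $\mathrm{UT}_{n+1}(\mathbb{F}_p)/\mathrm{LC}_{n+1}$, computing $\phi'(a)$, and performing the characteristic-$p$ expansion $(e+y+z)^q$ with $yz=0$ inside the induction step; you instead apply the Frobenius identity $(e+\xi)^q=e+\xi^q$ once and evaluate $\xi^q$ globally by enumerating length-$q$ walks in the support graph of $\xi$. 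Your enumeration is sound: from a label vertex the only length-$q$ walks consist of one (regular or jump) step into some $\alpha_{k,1}$ followed by $q-1$ forced steps, landing at label $k+1$ with weight $a_{i,k+1}$; from $\alpha_{i-1,k}$ (with $i<n$) the walk is forced up to label $i$ in $q-k$ steps and then has exactly $k$ steps left, landing at $\alpha_{j-1,k}$ with weight $a_{i,j}$; rows indexed by label $n$ and by $\alpha_{n-1,k}$ give no length-$q$ walks, and the step-count forces the endpoint type, so no spurious label--$\alpha$ or $k\neq k'$ entries arise. What each approach buys: yours is non-inductive and makes both $\phi(a)$ and the root completely explicit at the matrix level, reducing everything to a clean path count; the paper's induction produces the root in factored chain form and handles the image computation $\phi'(a)$ incrementally, which is what it later reuses when comparing with the second embedding~(\ref{eq:UT_embedding_LC}).
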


\begin{proof}

Let $\alpha_{i,j} \in \mathbb{Q}$ be such that
\[
i < \alpha_{i,1} < \dots < \alpha_{i,q-1} < i+1, \qquad i=1,\dots,n-1,
\]
and let UT$_m(\mathbb{F}_p)$ be generated by
\[
t_{i, \alpha_{i,1}}', t_{\alpha_{i,1}, \alpha_{i,2}}', \dots, t_{\alpha_{i,q-1}, i+1}',
\qquad i=1,\dots,n-1.
\]
Write $a=(a_{i,j})$.
Using induction on $n$ we will prove that solution of~(\ref{eq:power_p_eq_over_UT_n}) with respect to embedding~(\ref{eq:UT_embedding_FR}) has the form

\begin{equation}
\label{eq:solution_form}
x = x_{n-1} x_{n-2} \dots x_2 x_1,
\end{equation}
where
\begin{align*}
    x_1 &= t_{\alpha_{1,q-1}, 2}' \dots t_{\alpha_{1,1},\alpha_{1,2}}' t_{1,\alpha_{1,1}}'(a_{1,2}), \\
    x_2 &= t_{\alpha_{2,q-1}, 3}' \dots t_{\alpha_{2,1},\alpha_{2,2}}' t_{2,\alpha_{2,1}}'(a_{2,3}) t_{1,\alpha_{2,1}}'(a_{1,3}), \\
    & \dots \\
    x_{n-1} &= t_{\alpha_{n-1,q-1}, n}' \dots t_{\alpha_{n-1,1},\alpha_{n-1,2}}' t_{n-1,\alpha_{n-1,1}}'(a_{n-1,n}) \cdot \\
    & \qquad \cdot t_{n-2,\alpha_{n-1,1}}'(a_{n-2,n}) \dots t_{1,\alpha_{n-1,1}}'(a_{1,n}).
\end{align*}

{\bf Induction base.} If $n=2$ then we obtain the equation
\begin{equation}
\label{eq:first_iter}
x^q = t_{1,2}(a_{1,2}).
\end{equation}
Make an insertion of indices $\alpha_{1,j} \in \mathbb{Q}$ such that
\[
1 < \alpha_{1,1} < \alpha_{1,2} < \dots < \alpha_{1,q-1} < 2.
\]
Consider the group UT$_{q+1}(\mathbb{F}_p)$, generated by transvections
\[
t_{1,\alpha_{1,1}}', t_{\alpha_{1,1}, \alpha_{1,2}}', \dots, t_{\alpha_{1,q-1}, 2}',
\]
and define the mapping $\phi_1:$ UT$_2(\mathbb{F}_p) \to $ UT$_{q+1}(\mathbb{F}_p)$ by
$
\phi_1(t_{1,2}) = t_{1,2}'.
$
In UT$_{q+1}(\mathbb{F}_p)$ take the element
\[
x_1 = t_{\alpha_{1,q-1}, 2}' \dots t_{\alpha_{1,1}, \alpha_{1,2}}' t_{1, \alpha_{1,1}}'(a_{1,2}),
\]
then $x_1^q = t_{1,2}'(a_{1,2})$.
Hence $x_1$ is a solution of~(\ref{eq:first_iter}).

{\bf Induction step.}
Suppose that the equation
\begin{equation}
\label{eq:n_iter}
x^q = a
\end{equation}
over UT$_{n+1}(\mathbb{F}_p)$ is given.
Consider this equation over the factor UT$_n(\mathbb{F}_p)=$ UT$_{n+1}(\mathbb{F}_p)/\mathrm{LC}_{n+1}$.
By induction the latter is solvable with respect to embedding $\phi$ of the form~(\ref{eq:UT_embedding_FR}) and its solution $x$ has the form~(\ref{eq:solution_form}).

Make an insertion of indices $\alpha_{n,j} \in \mathbb{Q}$ such that
\[
n < \alpha_{n,1} < \alpha_{n,2} < \dots < \alpha_{n,q-1} < n+1.
\]
Consider the embedding $\phi':$ UT$_{n+1}(\mathbb{F}_p) \to$ UT$_{nq+1}(\mathbb{F}_p)$ defined by
\begin{equation}
\label{eq:n_embedding}
\begin{aligned}
\phi'(t_{k,k+1}) &= \phi(t_{k,k+1}), \quad k=1,\dots,n-1, \\ 
\phi'(t_{n,n+1}) &= t_{n,n+1}' \underbrace{ t_{\alpha_{n-1,1},\alpha_{n,1}}' \dots t_{\alpha_{n-1,q-1},\alpha_{n,q-1}}'}_{\Delta_n}.
\end{aligned}
\end{equation}
Compute the image of $a$ under $\phi'$.
It is clear that
$$
a = \prod\limits_{i=1}^n t_{i,n+1}(a_{i,n+1}) \bar{a},
$$
where $\bar{a} \in$ B$_{n+1}$ (in notations of~(\ref{eq:subgroups_of_UT})).
Hence
\[
\phi'(a) = \phi' \left( \prod_{i=1}^n t_{i,n+1}(a_{i,n+1}) \right) \phi(\bar{a}).
\]
Observe that $\phi(\bar{a}) = x^q$.
Computing for $i=n-1,\dots,1$ the values $\phi'(t_{i,n+1}) = \phi'([t_{i,i+1},t_{i+1,n+1}])$ we obtain
\begin{align*}
\phi'(t_{1,n+1})
    &= t_{1,n+1}' = t_{1,n+1}' \Delta_1, \\
\phi'(t_{i,n+1}) 
    &= t_{i,n+1}' \underbrace{ t_{\alpha_{i-1,1},\alpha_{n,1}}' t_{\alpha_{i-1,2},\alpha_{n,2}}' \dots t_{\alpha_{i-1,q-1},\alpha_{n,q-1}}'}_{\Delta_i}, \quad i=2,\dots,n-1.
\end{align*}
Observe that $\Delta_i$ and $t_{i,n+1}'$ commute and also $\Delta_i$ and $\Delta_j$ commute.
Denote $\Delta_i(\gamma) = \Delta_i^\gamma$ then
$
\phi'(t_{i,n+1}(\gamma)) = t_{i,n+1}'(\gamma) \Delta_i(\gamma)
$
and
\[
\phi' \left( \prod_{i=1}^n t_{i,n+1}(a_{i,n+1}) \right)
    = \left( \prod_{i=1}^n t_{i,n+1}'(a_{i,n+1}) \right) \Delta_2(a_{2,n+1}) \dots \Delta_n(a_{n,n+1}).
\]
Take
\begin{align*}
x_n
    &= t_{\alpha_{n,q-1},n+1}' \dots t_{n,\alpha_{n,1}}'(a_{n,n+1})  t_{n-1,\alpha_{n,1}}'(a_{n-1,n+1}) \dots t_{1,\alpha_{n,1}}'(a_{1,n+1}) \\
    &= e + a_{1,n+1} e_{1, \alpha_{n,1}} + \dots + a_{n,n+1} e_{n,\alpha_{n,1}} + e_{\alpha_{n,1},\alpha_{n,2}} + \dots + e_{\alpha_{n,q-1},n+1} \\
    &= e + y,
\end{align*}
then
$
x_n^q = \prod\limits_{i=1}^n t_{i,n+1}'(a_{i,n+1})
$
and
\[
\phi'(a) = x_n^q \Delta_2(a_{2,n+1}) \dots \Delta_n(a_{n,n+1}) x^q.
\]
We will show that $x' = x_n x$ is a solution of~(\ref{eq:n_iter}) with respect to embedding~(\ref{eq:n_embedding}).
Write $x = e + z$.
Since $yz = 0$, we get
\begin{align*}
(x_n x)^q &= (e + y + z)^q \\
    &= e + y^q + z^q + z^{q-1}y + z^{q-2}y^2 + \dots + zy^{q-1} \\
    &= (e + y^q) (e + z^{q-1}y + z^{q-2}y^2 + \dots + zy^{q-1}) (e + z^q) \\
    &= x_n^q (e + z^{q-1}y + z^{q-2}y^2 + \dots + zy^{q-1}) x^q.
\end{align*}
Observe that
\[
z^{q-k}y^k = a_{2,n+1} e_{\alpha_{1,k},\alpha_{n,k}} +
a_{3,n+1} e_{\alpha_{2,k},\alpha_{n,k}} + \dots +
a_{n,n+1} e_{\alpha_{n-1,k},\alpha_{n,k}},
\]
hence
\[
e + z^{q-1}y + z^{q-2}y^2 + \dots + zy^{q-1} =
\Delta_2(a_{2,n+1}) \dots \Delta_n(a_{n,n+1}).
\]
Finally we obtain
$
(x_n x)^q = \phi'(a),
$
so $x_n x$ is a solution of~(\ref{eq:n_iter}) with respect to embedding~(\ref{eq:n_embedding}).

In a similar way one can prove that solution of~(\ref{eq:power_p_eq_over_UT_n}) with respect to embedding~(\ref{eq:UT_embedding_LC}) has the form
\[
x = x_1 x_2 \dots x_{n-1},
\]
where
\begin{align*}
    x_1 &= t_{\alpha_{n-1,q-1}, n}'(a_{n-1,n}) t_{\alpha_{n-1,q-2},\alpha_{n-1,q-1}}' \dots t_{n-1,\alpha_{n-1,1}}', \\
    x_2 &= t_{\alpha_{n-2,q-1}, n}'(a_{n-2,n}) t_{\alpha_{n-2,q-1}, n-1}'(a_{n-2,n-1}) t_{\alpha_{n-2,q-2},\alpha_{n-2,q-1}}' \dots t_{n-2,\alpha_{n-2,1}}', \\
    & \dots \\
    x_{n-1} &= t_{\alpha_{1,q-1}, n}'(a_{1,n}) \dots t_{\alpha_{1,q-1},2}'(a_{1,2}) t_{\alpha_{1,q-2},\alpha_{1,q-1}}' \dots t_{1,\alpha_{1,1}}'.
\end{align*}

\end{proof}

Observe that the theorem above performs {\it simultaneous} adjunction of $q$-th roots to UT$_n(\mathbb{F}_p)$, i.e., any element of UT$_n(\mathbb{F}_p)$ has a $q$-th root in UT$_m(\mathbb{F}_p)$.

\begin{example}
Let $n=p=3$ and
$
a = \left( \begin{smallmatrix}
1 & a_{12} & a_{13} \\
0 & 1      & a_{23} \\
0 & 0      & 1
\end{smallmatrix} \right) \in \mathrm{UT}_3(\mathbb{F}_3).
$
According to Theorem~\ref{th:roots_in_UT_n}, solution of $x^3=a$ over UT$_3(\mathbb{F}_3)$ 
with respect to embedding~(\ref{eq:UT_embedding_FR}) has the form
\[
x = \left( \begin{smallmatrix}
1 & a_{12} & 0 & 0 & a_{13} & 0 & 0 \\
0 & 1 & 1 & 0 & 0 & 0 & 0 \\
0 & 0 & 1 & 1 & 0 & 0 & 0 \\
0 & 0 & 0 & 1 & a_{23} & 0 & 0 \\
0 & 0 & 0 & 0 & 1 & 1 & 0 \\
0 & 0 & 0 & 0 & 0 & 1 & 1 \\
0 & 0 & 0 & 0 & 0 & 0 & 1
\end{smallmatrix} \right) \in \mathrm{UT}_7(\mathbb{F}_3).
\]
With respect to embedding~(\ref{eq:UT_embedding_LC}) solution has the form
\[
x = \left( \begin{smallmatrix}
1 & 1 & 0 & 0 & 0 & 0 & 0 \\
0 & 1 & 1 & 0 & 0 & 0 & 0 \\
0 & 0 & 1 & a_{12} & 0 & 0 & a_{13} \\
0 & 0 & 0 & 1 & 1 & 0 & 0 \\
0 & 0 & 0 & 0 & 1 & 1 & 0 \\
0 & 0 & 0 & 0 & 0 & 1 & a_{23} \\
0 & 0 & 0 & 0 & 0 & 0 & 1
\end{smallmatrix} \right) \in \mathrm{UT}_7(\mathbb{F}_3).
\]
\end{example}

\section{Embeddings of wreath products}
\label{sec:wp}

Let $t_{i,j}$ denote a transvection in the group UT$_n(\mathbb{F}_p)$, where $n \geq 2$, $q=p^s$, $s \in \mathbb{Z}^+$, and let $\alpha_{i,j} \in \mathbb{Q}$ be such that
\[
i < \alpha_{i,1} < \dots < \alpha_{i,q-1} < i+1, \qquad i=1,\dots,n-1.
\]
Let UT$_m(\mathbb{F}_p)$, where $m=(n-1)q+1$, be generated by
\[
t_{i, \alpha_{i,1}}', t_{\alpha_{i,1}, \alpha_{i,2}}', \dots, t_{\alpha_{i,q-1}, i+1}', \qquad i=1,\dots,n-1.
\]

\begin{lemma}
\label{lm:technical_1}
The mapping $\theta:$ UT$_n(\mathbb{F}_p) \to$ UT$_m(\mathbb{F}_p)$, defined by
\[
\theta (t_{i,i+1}) = t_{i,\alpha_{i,1}}' t_{i,\alpha_{i,2}}'^{-1} t_{i,\alpha_{i,3}}' \dots t_{i,\alpha_{i,q-1}}'^{-1} t_{i,i+1}', \qquad i=1,\dots,n-1,
\]
is an embedding.
\end{lemma}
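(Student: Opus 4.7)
The plan is to extend $\theta$ to all transvections $t_{i,j}$, $i<j$, by iterated commutators, verify that the extended images satisfy the defining relations~(\ref{eq:UT_relations}) of $\mathrm{UT}_n(\mathbb{F}_p)$, and then deduce injectivity via a center argument. As a first step, since all factors of $\theta(t_{i,i+1})$ share the first index $i$ with second indices strictly greater than $i$, they commute pairwise and collapse to
\[
\theta(t_{i,i+1}) = e + M_i, \qquad M_i = \sum_{k=1}^{q-1}(-1)^{k+1}e_{i,\alpha_{i,k}} + e_{i,i+1}.
\]
Here $M_i^2 = 0$ since its support sits in a single row above the diagonal, so $\theta(t_{i,i+1})^p = e + p M_i = e$.

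Next I extend $\theta$ to all $t_{i,j}$ with $i<j$ recursively by $\theta(t_{i,j+1}) := [\theta(t_{i,j}),\theta(t_{j,j+1})]$, a definition forced by the identity $[t_{i,j},t_{j,j+1}] = t_{i,j+1}$ in $\mathrm{UT}_n(\mathbb{F}_p)$, and prove by induction on $j-i$ that
\[
\theta(t_{i,j}) = e + \sum_{k=1}^{q-1}(-1)^{k+1}e_{i,\alpha_{j-1,k}} + e_{i,j}.
\]
The key is the elementary identity $[e+a,e+b] = e+ab$, valid whenever $a^2 = b^2 = 0$ and $ba = 0$, which follows from direct expansion of $(e-a)(e-b)(e+a)(e+b)$. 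With $a := \theta(t_{i,j})-e$ and $b := \theta(t_{j,j+1})-e$: each of $a^2, b^2$ vanishes because its support lies in a single row above the diagonal; $ba = 0$ because every column of $b$ strictly exceeds $j\geq i+1$ and so cannot equal $i$; and the inductive hypothesis gives $a$ the coefficient $1$ at column $j$, so $ab$ picks out precisely the row-$i$ copy of $b$'s columns, producing exactly the claimed expression.

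Armed with this shape, the remaining relations in~(\ref{eq:UT_relations}) follow by the same matrix algebra. The power relation $\theta(t_{i,j})^p = e$ is immediate from $(\theta(t_{i,j})-e)^2 = 0$; the identity $[\theta(t_{i,j}),\theta(t_{j,k})] = \theta(t_{i,k})$ for any $k>j$ is the same $ab$-calculation as above; and for $[\theta(t_{i,j}),\theta(t_{k,l})] = e$ when $j\neq k$ and $i\neq l$ we split into two cases: if $i = k$, both matrices have support only in row $i$ and commute automatically; if $i\neq k$, both $ab$ and $ba$ vanish because the column-sets of $a$ and $b$ (each a collection of $\alpha$-indices together with a single integer $j$ or $l$) cannot contain the required integers $k$ or $i$. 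Hence $t_{i,j}\mapsto \theta(t_{i,j})$ extends to a homomorphism $\tilde\theta\colon \mathrm{UT}_n(\mathbb{F}_p)\to\mathrm{UT}_m(\mathbb{F}_p)$ agreeing with $\theta$ on the generators.

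For injectivity, the center of $\mathrm{UT}_n(\mathbb{F}_p)$ equals $\langle t_{1,n}\rangle$ and $\tilde\theta(t_{1,n})$ has a nonzero entry at position $(1,n)$ by the formula above, so it is nontrivial. Any nontrivial normal subgroup of a finite $p$-group meets the center nontrivially; hence $\ker\tilde\theta = \{1\}$, and $\theta$ is an embedding. The main obstacle I expect is the inductive shape calculation: one must verify that taking the commutator with $\theta(t_{j,j+1})$ preserves the \emph{fat transvection} form with shifted indices. Once that is in hand, every other relation is a direct consequence of the same row/column-support bookkeeping.
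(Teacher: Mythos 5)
Your proof is correct and follows essentially the same route as the paper: compute the explicit form $\theta(t_{i,j}) = e + \sum_{k}(-1)^{k+1}e_{i,\alpha_{j-1,k}} + e_{i,j}$ (the paper gets this via the identity $[x,yz]=[x,z][x,y][x,y,z]$, you by induction on $j-i$), verify relations~(\ref{eq:UT_relations}) using the presentation of $\mathrm{UT}_n(\mathbb{F}_p)$, and conclude injectivity from nontriviality on the center. Your write-up merely fills in the details the paper leaves as ``easy to check,'' including the clean nilpotent-matrix identity $[e+a,e+b]=e+ab$ and the center/kernel argument.
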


\begin{proof}
From the following identity
\[
[x,yz]=[x,z][x,y][x,y,z]
\]
we obtain
\[
\theta(t_{i,j}) = t_{i,\alpha_{j-1,1}}' t_{i,\alpha_{j-1,2}}'^{-1} t_{i,\alpha_{j-1,3}}' \dots t_{i,\alpha_{j-1,q-1}}'^{-1} t_{i,j}'.
\]
It is easy to check that relations~(\ref{eq:UT_relations}) hold for $\theta(t_{i,j})$ and $\theta(g) \neq 1$ for $g \neq 1$.

\end{proof}

\begin{lemma}
\label{lm:technical_2}
Let $p$ be a prime, $q=p^s$, $s \in \mathbb{Z}^+$,
\[
A = \left( \begin{smallmatrix}
1 & 1 & 0 & \dots & 0 \\
 & 1 & 1 & \dots & 0 \\
\vdots && \ddots & \ddots & \vdots \\
 &&& 1 & 1\\
0 && \dots && 1
\end{smallmatrix} \right), \;
B = \left( \begin{smallmatrix}
0 & 0 & 0 & \dots & 0 & 0 \\
0 & 0 & 0 & \dots & 0 & 0 \\
\vdots & \vdots & \vdots && \vdots & \vdots \\
0 & 0 & 0 & \dots & 0 & 0 \\
1 & -1 & 1 & \dots & -1 & 1
\end{smallmatrix}\right) \in M_{q \times q}(\mathbb{F}_p)
\]
and $M_i = A^{-i} B A^i$, for $i=0,\dots,q-1$ ($M_0=B$).
Then the following holds:
\begin{enumerate}
\item[1)] $(1,-1,1,\dots,-1,1)M_i = (0,\dots,0)$ for $i=1,\dots,q-1$,
\item[2)] $\sum_{i=0}^{q-1} (1,-1,1,\dots,-1,1) A^i = (0,\dots,0,1)$,
\item[3)] $\sum_{i=0}^{q-1} M_i = E$.
\end{enumerate}
\end{lemma}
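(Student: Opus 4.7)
The plan is to exploit the rank-one structure of $B$. Observe that $B = e_q v$, where $e_q = (0,\dots,0,1)^T$ and $v$ is the given alternating row, so
\[
M_i = (A^{-i} e_q)(v A^i)
\]
is an outer product of a column and a row. The other two ingredients are $A = I + N$, where $N$ is the superdiagonal nilpotent of index $q$, and $A^q = I$ (the freshman's dream in characteristic $p$, since $q = p^s$). These give $A^{-i} = A^{q-i}$ and the entry formula $(A^j)_{kl} = \binom{j}{l-k}$.

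For (1) I would rewrite $v M_i = (v A^{-i} e_q)(v A^i)$: this is a scalar times the row $v A^i$, so it vanishes once the scalar $(v A^{q-i})_q$ vanishes. Unfolding the last entry gives $(v A^j)_q = \sum_{l=0}^{q-1} v_{q-l} \binom{j}{l}$. For odd $p$ one has $v_{q-l} = (-1)^l$ (since $q$ is odd) and the sum collapses to $(1-1)^j = 0$; for $p = 2$ every $v_j$ equals $1$ and the sum is $2^j \equiv 0$. Either way this vanishes for $1 \leq j \leq q-1$, which covers $i = 1,\dots,q-1$.

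For (2) the workhorse is the collapse
\[
\sum_{i=0}^{q-1} (I+N)^i = \sum_{k=0}^{q-1} \binom{q}{k+1} N^k = N^{q-1},
\]
using the hockey-stick identity and the divisibility $p \mid \binom{q}{j}$ for $1 \leq j \leq q-1$. Then $v \sum_i A^i = v N^{q-1}$, and since $N^{q-1}$ has its only nonzero entry at position $(1,q)$, the product is $(0,\dots,0,v_1) = (0,\dots,0,1)$.

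For (3), set $C = \sum_{i=0}^{q-1} M_i$ and use cyclicity: reindexing $i \mapsto i-1 \pmod q$ yields $A C A^{-1} = C$, so $C$ centralizes $A$. Since $A$ is a single Jordan block of full size, its centralizer is the algebra of polynomials in $N$, i.e., the upper-triangular Toeplitz matrices, so $C$ is determined by its first row. Computing $(C)_{1k} = \sum_i (A^{-i})_{1q} (v A^i)_k$, the coefficient $(A^{-i})_{1q} = \binom{q-i}{q-1}$ vanishes for every $i \in \{0,\dots,q-1\}$ except $i = 1$, where it equals $1$. Hence $(C)_{1k} = (v A)_k = v_k + v_{k-1} = \delta_{k,1}$, and therefore $C = E$. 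The main obstacle is bookkeeping rather than any deep idea: the alternating pattern of $v$ interacts with the mod-$p$ vanishing of binomial coefficients, and the $p = 2$ and $p$ odd cases look superficially different, though both collapse to the surviving top term $v_1 = 1$; if a sign slips anywhere it will almost certainly be in step (1), where the odd-$q$ parity identity $v_{q-l} = (-1)^l$ is the likely pitfall.
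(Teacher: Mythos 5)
Your proposal is correct; I checked the key computations and they all go through: the outer-product identity $M_i=(A^{-i}e_q)(vA^i)$, the vanishing $(vA^j)_q=\sum_{l}v_{q-l}\binom{j}{l}=0$ for $1\le j\le q-1$ (including the $p=2$ case), the collapse $\sum_{i=0}^{q-1}A^i=N^{q-1}$ via hockey-stick plus $p\mid\binom{q}{j}$, and the first-row computation in which only the $i=1$ term survives because $(A^{q-i})_{1q}=\binom{q-i}{q-1}$. Part 1) is essentially the paper's reduction (the paper also exploits the rank-one structure and shows $vA^je_q=0$, just by a telescoping argument instead of the binomial theorem), but parts 2) and 3) take a genuinely different route. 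The paper deduces 2) from 3) by observing that the last row of $M_i$ is $vA^i$, whereas you prove 2) directly and independently, which makes the logical structure more modular and yields the pleasant closed form $\sum_i A^i=N^{q-1}$. For 3), the paper notes that each $M_i$ is lower triangular, solves $XA=AX$ within lower triangular matrices to get $M=\lambda E$, and then normalizes via $M_0M=M_0$ (which quietly uses part 1)); you instead invoke the standard fact that the centralizer of a full Jordan block is the algebra of polynomials in $N$ (upper-triangular Toeplitz matrices) and pin down $C=E$ by computing its first row outright. Your version trades the paper's elementary triangularity observation and normalization trick for a slightly heavier but standard centralizer fact plus a cleaner one-shot identification of $C$; if you want the argument fully self-contained you should either include the short proof that a matrix commuting with a nonderogatory $A$ is a polynomial in $A$, or note (as the paper does) that $C$ is also lower triangular, which reduces the centralizer step to a triviality.
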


\begin{proof}
In the matrix $M_i$ each column $j$ $(j=2,\dots,q)$ is a multiple of the first one.
Indeed, for $M_0=B$ this statement holds and multiplications by $A^{-1}$ on the left and by $A$ on the right preserve this property.
Multiplication by $A$ on the right doesn't change the first column.
Hence, to prove the first statement it is enough to show that (notice that $A^{-i}=A^{q-i}$)
\[
(1,-1,1,\dots,-1,1) A^i \left(\begin{smallmatrix}
0 \\
\vdots \\
0 \\
1
\end{smallmatrix}\right) = 0, \quad for \ \ i=1,\dots,q-1.
\]
If $\mathbf{x}^T=(x_q,\dots,x_1)$ then $A\mathbf{x}=(x_q + x_{q-1}, \dots, x_2+x_1,x_1)$.
Further
\[
(1,-1,1,\dots,-1,1) \left(\begin{smallmatrix}
x_q + x_{q-1} \\
\vdots \\
x_2 + x_1 \\
x_1
\end{smallmatrix}\right) = x_q = 0,
\]
since in matrices
\[
\left(\begin{smallmatrix}
y_q \\
\vdots \\
y_2 \\
y_1
\end{smallmatrix}\right) = A^i \left(\begin{smallmatrix}
0 \\
\vdots \\
0 \\
1
\end{smallmatrix}\right), \quad i=1,\dots,q-1,
\]
the value of $y_q$ is equal to $0$.
This proves the first statement.

The second statement follows from the third one, since multiplication of $B$ by $A^{-1}$ on the left doesn't change the last row of $B$.

Denote $M = \sum_{i=0}^{q-1} M_i$.
Observe that all $M_i$ (and respectively $M$) are lower triangular matrices and $A^{-1}MA=M$.
Then from the system of linear equations $XA=AX$ on the unknown lower triangular matrix $X$ it follows that $X = \lambda E$.
Since $M_0 M = M_0$ we have $\lambda=1$ and $M=E$.
\end{proof}

\begin{theorem}
\label{th:main}
The wreath product UT$_n(\mathbb{F}_p) \wr C_q$ ($n \geq 2$) of unitriangular group UT$_n(\mathbb{F}_p)$ with the cyclic group of order $q=p^s$, $s \in \mathbb{Z}^+$, embeds in UT$_m(\mathbb{F}_p)$, where $m=(n-1)q+1$.
\end{theorem}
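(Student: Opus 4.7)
The plan is to realize $\mathrm{UT}_n(\mathbb{F}_p) \wr C_q$ as the subgroup of $\mathrm{UT}_m(\mathbb{F}_p)$ generated by $\theta(\mathrm{UT}_n(\mathbb{F}_p))$ from Lemma~\ref{lm:technical_1} together with a single element $c$ of order $q$, where conjugation by $c$ supplies the cyclic action on $q$ mutually commuting copies of $\mathrm{UT}_n(\mathbb{F}_p)$ that together constitute the base group.

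I would define $c = e + N$ in $\mathrm{UT}_m(\mathbb{F}_p)$, where $N$ has entry $1$ at each position $(\alpha_{i,j}, \alpha_{i,j+1})$ with $1 \leq i \leq n-1$, $1 \leq j \leq q-2$, and at each position $(\alpha_{i,q-1}, i+1)$ with $1 \leq i \leq n-1$, and $0$ elsewhere. Then $N$ is block-diagonal across the $n-1$ disjoint blocks $\{\alpha_{i,1}, \ldots, \alpha_{i,q-1}, i+1\}$, each block being a strict nilpotent shift on $q$ indices; hence $N^q = 0$, giving $c^q = (e+N)^{p^s} = e + N^{p^s} = e$. A direct check shows the order of $c$ is exactly $q$, and on each block $c$ realizes the matrix $A$ of Lemma~\ref{lm:technical_2}. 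Setting $G_k = c^k \theta(\mathrm{UT}_n(\mathbb{F}_p)) c^{-k}$ for $k = 0, \ldots, q-1$ produces $q$ subgroups isomorphic to $\mathrm{UT}_n(\mathbb{F}_p)$, cyclically permuted by conjugation with $c$.

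The technical heart is verifying $[G_k, G_l] = 1$ for $k \neq l$. A generator $\theta(t_{i,j}) = e + v$ has all its nonzero entries in row $i$ with column pattern $r = (1,-1,\ldots,-1,1)$ on the block associated to the edge $(j-1,j)$ (by the formula $\theta(t_{i,j}) = t_{i,\alpha_{j-1,1}}' (t_{i,\alpha_{j-1,2}}')^{-1} \cdots t_{i,j}'$ from the proof of Lemma~\ref{lm:technical_1}). Conjugation by $c^k$ has two effects: it propagates row $i$ upward into rows $\alpha_{i-1, q-1}, \alpha_{i-1, q-2}, \ldots, \alpha_{i-1, q-k}$ with binomial weights, and it shears the block column pattern from $r$ to $r A^{-k}$. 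Expanding the commutator $[c^k(e+v)c^{-k}, c^l(e+u)c^{-l}]$ produces cross terms whose vanishing reduces to identity~(1) of Lemma~\ref{lm:technical_2}, namely $r M_i = 0$ for $i \neq 0$; equivalently, the cancellation is governed by the binomial identity $\sum_{j=0}^k (-1)^j \binom{k}{j} = 0$ for $k \geq 1$.

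Finally, the map $\Theta : \mathrm{UT}_n(\mathbb{F}_p) \wr C_q \to \mathrm{UT}_m(\mathbb{F}_p)$ sending the cyclic generator of $C_q$ to $c$ and the $k$-th base-group copy of $\mathrm{UT}_n(\mathbb{F}_p)$ to $G_k$ is a well-defined homomorphism (using $c^q = e$, the cyclic conjugation action, and the commutation $[G_k, G_l] = 1$). An order count — the row-$1$ patterns of the generators in the different $G_k$ are the $q$ linearly independent vectors $r, rA^{-1}, \ldots, rA^{-(q-1)}$ of Lemma~\ref{lm:technical_2}, forcing $G_0 \cap \prod_{k \geq 1} G_k = 1$ — upgrades $\Theta$ to an isomorphism onto its image of order $|\mathrm{UT}_n(\mathbb{F}_p)|^q \cdot q$. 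The main obstacle is the commutativity verification, which hinges on carefully tracking how $c^k$-conjugation spreads the row content of $\theta(t_{i,j})$ across the $\alpha$-rows with binomial weights; identities~(2) and~(3) of Lemma~\ref{lm:technical_2} further serve to identify the image of the diagonal subgroup with the natural transvection subgroup $\langle t_{i,j}' : i < j \rangle \simeq \mathrm{UT}_n(\mathbb{F}_p)$, in agreement with Theorem~\ref{th:roots_in_UT_n} via Lemma~\ref{lm:equiv}.
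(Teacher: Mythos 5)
Your construction is essentially the paper's own proof: the same element $c$ (which the paper writes as $c_1\cdots c_{n-1}$ in~(\ref{eq:c})), the same $q$ conjugate copies of $\theta(\mathrm{UT}_n(\mathbb{F}_p))$ from Lemma~\ref{lm:technical_1}, elementwise commutativity reduced to statement~1) of Lemma~\ref{lm:technical_2}, and injectivity on the base group via the linearly independent row patterns $r, rA^{\pm 1},\dots$ (the paper phrases this as disjointness of the centers $\langle z_l\rangle$). The only inaccuracy is the closing aside: by Lemma~\ref{lm:equiv} the diagonal of the base group maps onto the image of embedding~(\ref{eq:UT_embedding_FR}), whose generators are products of $q$ transvections, not onto the integer-indexed subgroup $\langle t_{i,j}'\rangle$; this side remark does not affect the proof of the theorem.
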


\begin{proof}
Let $t_{i,j}$ denote a transvection in the group UT$_n(\mathbb{F}_p)$.
Let $\alpha_{i,j} \in \mathbb{Q}$ be such that
\[
i < \alpha_{i,1} < \dots < \alpha_{i,q-1} < i+1, \qquad i=1,\dots,n-1,
\]
and let UT$_m(\mathbb{F}_p)$, where $m=(n-1)q+1$, be generated by
\[
t_{i, \alpha_{i,1}}', t_{\alpha_{i,1}, \alpha_{i,2}}', \dots, t_{\alpha_{i,q-1}, i+1}', \qquad i=1,\dots,n-1.
\]
By $a$ we denote the generator of $C_q$.
In UT$_m(\mathbb{F}_p)$ we will construct an element $c$ of order $q$ and subgroups $G_1,\dots,G_q$ such that the following conditions hold:
\begin{enumerate}
\item[1)] $G_{i+1} = c^{-1} G_i c$, for $i=1,\dots,q-1$;
\item[2)] $G_i \simeq$ UT$_n(\mathbb{F}_p)$, $\phi_i:$ UT$_n(\mathbb{F}_p) \to G_i$ is a corresponding isomorphism and $\phi_{i+1}(t_{j,j+1}) = c^{-1} \phi_i(t_{j,j+1}) c$, for $j=1,\dots,n-1$;
\item[3)] $G_i$ and $G_j$ are commuting element-wise for $i \neq j$;
\item[4)] $G_i \cap G_j = \{1\}$ for $i \neq j$.
\end{enumerate}
Then the mapping $\tau:$ UT$_n(\mathbb{F}_p) \wr C_q(a) \to$ UT$_m(\mathbb{F}_p)$, defined by
\begin{equation}\label{eq:wp_embedding}
\tau: a^k (h_1,h_2,\dots, h_q) \mapsto c^k \phi_1(h_1) \phi_2(h_2) \dots \phi_q(h_q),
\end{equation}
is an embedding.
Observe that according to Lemma~\ref{lm:wp_class} this value of $m$ is the minimal possible.

Denote $g_{i,j} = \phi_i(t_{j,j+1})$, for $i=1,\dots,q$, $j=1,\dots,n-1$.
To prove 3) we will show that $[g_{k,i},g_{l,j}]=1$ for $k \neq l$ and $i,j=1,\dots,n-1$.
Since $g_{k+1,i} = c^{-1} g_{k,i} c$ for $k=1,\dots,q-1$ and
\[
[g_{k,i},g_{l,j}]=1 \Longleftrightarrow [g_{1,i},c^{-(l-k)}g_{1,j}c^{l-k}]=1,
\]
it is enough to prove that $[g_{1,i},g_{l,j}]=1$ for $l=2,\dots,q$ and $i,j=1,\dots,n-1$.

From 3) it follows that to prove 4) it is enough to show that $\zeta(G_i) \cap \zeta(G_j) = \{1\}$ for $i \neq j$.

For $i=1,\dots,n-1$ denote
\begin{align}\label{eq:c}
c_i &= e + e_{\alpha_{i,1},\alpha_{i,2}} + e_{\alpha_{i,2},\alpha_{i,3}} + \dots + e_{\alpha_{i,q-1},i+1} \\ \nonumber
    &= t_{\alpha_{i,q-1},i+1}' \dots t_{\alpha_{i,2},\alpha_{i,3}}' t_{\alpha_{i,1},\alpha_{i,2}}'
\end{align}
and $c = c_1 c_2 \dots c_{n-1}$.
Clearly $[c_i,c_j]=1$ and $c_i$ has order $q$, hence $c$ has order $q$.

Define the ordered sets
\begin{align*}
I_1 &= \{ 1 \}, \\
I_i &= \{ \alpha_{i-1,1}, \alpha_{i-1,2}, \dots, \alpha_{i-1,q-1}, i \}, \qquad i=2,\dots,n.
\end{align*}

\noindent
Write
\begin{equation}\label{eq:h}
h_k = \prod_{ \substack{i \in I_k, \\ j \in I_{k+1}}} t_{i,j}'(\gamma_{i,j}),
\end{equation}
where $\gamma_{i,j} \in \mathbb{F}_p$ and $k=1,\dots,n-1$.
Observe that in the product above all transvections commute.
With $h_k$ we associate $|I_k| \times |I_{k+1}|$ matrix $M(h_k) = (\gamma_{i,j})$ over the field $\mathbb{F}_p$, with rows and column indexed by $I_k$ and $I_{k+1}$ respectively.
And conversely, with any such matrix we associate an element of the form~(\ref{eq:h}).
Further we will reduce operations with elements of the form~(\ref{eq:h}) to operations with corresponding matrices.

Observe that
\begin{align*}
c^{-1} h_1 c &= c_1^{-1} h_1 c_1, \\
c^{-1} h_k c &= c_{k-1}^{-1} c_k^{-1} h_k c_k c_{k-1}, \quad k=2,\dots,n-1.
\end{align*}

\noindent
From~(\ref{eq:UT_relations}) we obtain
\begin{align*}
t_{i,j}'(\alpha)^{t_{j,k}'(\beta)} &= t_{i,k}'(\alpha\beta) t_{i,j}'(\alpha), \quad \alpha,\beta \in \mathbb{F}_p, \\
t_{j,k}'(\beta)^{t_{i,j}'(\alpha)} &= t_{i,k}'(-\alpha\beta) t_{j,k}'(\beta), \\
t_{i,j}'(\alpha)^{t_{k,l}'(\beta)} &= t_{i,j}'(\alpha), \quad j \neq k, \; i \neq l.
\end{align*}

\noindent
Thus $c^{-1} h_k c$ and $h_k$ ($k=1,\dots,n-1$) are elements of the form~(\ref{eq:h}).
For $k=1,\dots,n-1$ we have
$
M(c_k^{-1}h_kc_k) = M(h_k) A
$
, where
\[
A = \left(\begin{smallmatrix}
1 & 1 & 0 & \dots & 0 \\
 & 1 & 1 & \dots & 0 \\
\vdots && \ddots & \ddots & \vdots \\
 &&& 1 & 1\\
0 && \dots && 1
\end{smallmatrix}\right) \in \mathrm{UT}_q(\mathbb{F}_p).
\]

\noindent
For $k=2,\dots,n-1$ we have
$
M(c_{k-1}^{-1} h_k c_{k-1}) = A^{-1} M(h_k).
$
Combining all the above we obtain
\begin{align*}
M(c^{-1}h_1c) &= M(h_1) A, \\
M(c^{-1}h_kc) &= A^{-1} M(h_k) A, \qquad k=2,\dots,n-1.
\end{align*}

Take
\begin{equation}\label{eq:h_value}
h_k = t_{k,\alpha_{k,1}}' t_{k,\alpha_{k,2}}'^{-1} t_{k,\alpha_{k,3}}' \dots t_{k,\alpha_{k,q-1}}'^{-1} t_{k,k+1}',
\end{equation}
then $M(h_1) = (1,-1,1,\dots,-1,1)$ (for $p=2$ we treat it as $(1,1,\dots,1)$)
and
\[
M(h_k) = \left(\begin{smallmatrix}
    0 & 0 & 0 & \dots & 0 & 0 \\
    \vdots & \vdots & \vdots && \vdots & \vdots \\
    0 & 0 & 0 & \dots & 0 & 0 \\
    1 & -1 & 1 & \dots & -1 & 1
\end{smallmatrix}\right), \quad k=2,\dots,n-1.
\]

Take $g_{1,k}=h_k$, for $k=1,\dots,n-1$, then by~Lemma \ref{lm:technical_1} the subgroup $G_1 = \langle g_{1,1},\dots,g_{1,n-1} \rangle$ is isomorphic to UT$_n(\mathbb{F}_p)$.
Further take $g_{i+1,k} = c^{-1} g_{i,k} c$, for $i=1,\dots,q-1$ and $k=1,\dots,n-1$, and $G_i = \langle g_{i,1},\dots,g_{i,n-1} \rangle$.
This proves statements 1) and 2).

Since $c^{-1} h_k c$ and $h_k$ are elements of the form~(\ref{eq:h}) then $[g_{1,k},g_{l,k}]=1$, for $k=1,\dots,n-1$, $l=2,\dots,q$.
It is also clear that $[g_{1,k},g_{l,j}]=1$ for $|j-k|>1$.
So it remains to consider the case when $|j-k|=1$.
Let $g_{1,k} = h_k = e + \mathcal{A}$ and $g_{l,k+1} = c^{-(l-1)} h_{k+1} c^{l-1} = e + \mathcal{B}$.
We will show that $\mathcal{AB}=\mathcal{BA}$.
Clearly $\mathcal{BA}=0$ and
\[
\mathcal{AB}=0 \Longleftrightarrow M(h_k) M(c^{-(l-1)} h_{k+1} c^{l-1}) = 0.
\]
The latter follows from the first statement of Lemma~\ref{lm:technical_2}.
Thus statement 3) is proved.

Observe that $\zeta(G_1) = \langle z_1 \rangle$, where
\[
z_1 = t_{1,\alpha_{n-1,1}}' t_{1,\alpha_{n-1,2}}'^{-1} \dots t_{1,\alpha_{n-1,q-1}}'^{-1} t_{1,n}'.
\]
Denote $z_l = c^{-(l-1)} z_1 c^{l-1}$, for $l=2,\dots,q$.
Define $y_i$ by
\[
(y_1,\dots,y_q) = (1,-1,\dots,-1,1) A^{l-1},
\]
then
\[
z_l = t_{1,\alpha_{n-1,1}}'^{y_1} t_{1,\alpha_{n-1,2}}'^{y_2} \dots t_{1,\alpha_{n-1,q-1}}'^{y_{q-1}} t_{1,n}'^{y_q}.
\]
The centers $\zeta(G_l) = \langle z_l \rangle$ are disjoint.
This proves statement 4) and brings our proof to the end.
\end{proof}

Let $\rho:$ UT$_n(\mathbb{F}_p) \to$ UT$_n(\mathbb{F}_p) \wr C_q$ be the embedding of UT$_n(\mathbb{F}_p)$ into the diagonal subgroup of the base group, $\tau:$ UT$_n(\mathbb{F}_p) \wr C_q(c) \to$ UT$_m(\mathbb{F}_p)$ be embedding~(\ref{eq:wp_embedding}), constructed in Theorem~\ref{th:main}, and $\phi:$ UT$_n(\mathbb{F}_p) \to$ UT$_m(\mathbb{F}_p)$ be embedding~(\ref{eq:UT_embedding_FR}).

\begin{lemma}\label{lm:equiv}
$\tau \circ \rho \equiv \phi$.
\end{lemma}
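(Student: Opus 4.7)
The plan is to check that both maps agree on the generators $t_{k,k+1}$, $k=1,\ldots,n-1$, of $\mathrm{UT}_n(\mathbb{F}_p)$; this suffices since $\tau \circ \rho$ and $\phi$ are both group homomorphisms.

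First, I would unravel the left side. Because $\rho$ embeds $\mathrm{UT}_n(\mathbb{F}_p)$ into the diagonal subgroup of the base group, we have $\rho(t_{k,k+1}) = a^0(t_{k,k+1},\ldots,t_{k,k+1})$, and the definition of $\tau$ in (\ref{eq:wp_embedding}) gives
\[
\tau(\rho(t_{k,k+1})) \;=\; \phi_1(t_{k,k+1}) \phi_2(t_{k,k+1}) \cdots \phi_q(t_{k,k+1}) \;=\; \prod_{i=1}^q g_{i,k}.
\]
Next, I would show that this product collapses to the sum of its ``off-diagonal'' parts. By construction, $g_{i,k} = e + \mathcal{A}_{i,k}$ with $\mathcal{A}_{i,k}$ supported in rows $I_k$ and columns $I_{k+1}$; since $I_k \cap I_{k+1} = \emptyset$, we have $\mathcal{A}_{i,k}\mathcal{A}_{l,k}=0$ for all $i,l$, so
\[
\prod_{i=1}^q g_{i,k} \;=\; e + \sum_{i=1}^{q} \mathcal{A}_{i,k}.
\]
In particular $\prod_i g_{i,k}$ is again an element of the form~(\ref{eq:h}), and its associated matrix is $M\!\left(\prod_{i=1}^q g_{i,k}\right) = \sum_{i=0}^{q-1} M(c^{-i}h_k c^{i})$.

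Now I would apply the formulas for the action of conjugation on $M$ derived in the proof of Theorem~\ref{th:main}, together with Lemma~\ref{lm:technical_2}. For $k\ge 2$,
\[
\sum_{i=0}^{q-1} M(c^{-i}h_k c^{i}) \;=\; \sum_{i=0}^{q-1} A^{-i} B A^{i} \;=\; \sum_{i=0}^{q-1} M_i \;=\; E
\]
by statement~3 of Lemma~\ref{lm:technical_2}. Reading $E$ through the indexings $I_k = (\alpha_{k-1,1},\ldots,\alpha_{k-1,q-1},k)$ and $I_{k+1}=(\alpha_{k,1},\ldots,\alpha_{k,q-1},k+1)$ yields
\[
\prod_{i=1}^q g_{i,k} \;=\; t'_{\alpha_{k-1,1},\alpha_{k,1}} \cdots t'_{\alpha_{k-1,q-1},\alpha_{k,q-1}}\, t'_{k,k+1},
\]
which is exactly $\phi(t_{k,k+1})$ from (\ref{eq:UT_embedding_FR}). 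For $k=1$, left multiplication by $c^{-i}$ has no effect (there is no $c_0$ factor), so
\[
\sum_{i=0}^{q-1} M(c^{-i}h_1 c^i) \;=\; M(h_1) \sum_{i=0}^{q-1} A^i \;=\; (0,\ldots,0,1)
\]
by statement~2 of Lemma~\ref{lm:technical_2}; this $1\times q$ row, read through $I_2$, gives the element $t'_{1,2} = \phi(t_{1,2})$.

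The main obstacle is less any single computation than the careful bookkeeping of index sets: verifying that the conjugates $c^{-i}h_k c^{i}$ keep their nonzero off-diagonal entries confined to $I_k \times I_{k+1}$, so that elements of distinct $G_i$'s multiply by plain addition of matrices and the matrix dictionary established in the proof of Theorem~\ref{th:main} applies verbatim, allowing Lemma~\ref{lm:technical_2} to be invoked directly.
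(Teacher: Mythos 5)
Your proof is correct and follows essentially the same route as the paper: both compute $\tau(\rho(t_{k,k+1}))=\prod_{i=1}^q g_{i,k}$, reduce this product to a sum of the associated matrices $\sum_i A^{-i}M(h_k)A^i$ (resp.\ $\sum_i M(h_1)A^i$ for $k=1$), and invoke statements 2) and 3) of Lemma~\ref{lm:technical_2} to identify the result with $\phi(t_{k,k+1})$. Your added justification that the product of the $g_{i,k}$ collapses to a sum because $I_k\cap I_{k+1}=\emptyset$ is a detail the paper leaves implicit, but the argument is the same.
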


\begin{proof}
Using notations of Theorem~\ref{th:main} we will compute the diagonal subgroup of the base group.
Denote
\[
f_k = \prod_{l=1}^{q} g_{l,k}, \qquad k=1,\dots,n-1,
\]
then $f_k$ is an element of the type~(\ref{eq:h}).
Observe that
\begin{align*}
M(f_1) &= \sum_{l=1}^{q} M(g_{l,1})
        = \sum_{l=0}^{q-1} M(h_1) A^l, \\
M(f_k) &= \sum_{l=1}^{q} M(g_{l,k})
        = \sum_{l=0}^{q-1} A^{-l} M(h_k) A^l, \quad k=2,\dots,n-1.
\end{align*}
From Lemma~\ref{lm:technical_2} (statements 2) and 3)) it follows that
\begin{align*}
M(f_1) &= (0,\dots,0,1), \\
M(f_k) &= E, \qquad k=2,\dots,n-1.
\end{align*}
Hence $\phi(t_{i,i+1}) = \tau(\rho(t_{i,i+1}))$, for $i=1,\dots,n-1$.

\end{proof}

\bigskip
{\bf\large  Open question.}
Does there exist a nontrivial variety ${\mathbb L}$ of groups,  distinct from the variety ${\mathbb G}$ of all groups and the variety ${\mathbb A}$ of all abelian groups, such that every group $G \in {\mathbb L}$ is isomorphic to a subgroup of a divisible group $\bar{G} \in {\mathbb L}$?

\end{document}